\numberwithin{equation}{section}
\newcommand{\rhom}{R{\mathcal{H}}om}
\newcommand{\CC}{\mathbb{C}}
\newcommand{\RR}{\mathbb{R}}
\newcommand{\ZZ}{\mathbb{Z}}
\newcommand{\D}{\mathcal{D}}
\newcommand{\F}{\mathcal{F}}
\newcommand{\G}{\mathcal{G}}
\newcommand{\LL}{\mathcal{L}}
\newcommand{\M}{\mathcal{M}}
\newcommand{\sho}{\mathcal{O}}
\newcommand{\PP}{{\mathbb P}}
\newcommand{\Vspace}{\vspace{3ex}}
\renewcommand{\dim}{{\rm dim}}
\newcommand{\Vol}{{\rm Vol}}
\newcommand{\e}{\varepsilon}
\newcommand{\id}{{\rm id}}
\newcommand{\Sol}{{\rm Sol}}
\newcommand{\Db}{{\bf D}^{b}}
\newcommand{\Dbc}{{\bf D}_{c}^{b}}
\newcommand{\tl}[1]{\widetilde{#1}}
\newcommand{\simto}{\overset{\sim}{\longrightarrow}}
\newcommand{\dsum}{\displaystyle \sum}
\newcommand{\CF}{{\rm CF}}
\newtheorem{definition}{Definition}[section]
\newtheorem{theorem}[definition]{Theorem}
\newtheorem{proposition}[definition]{Proposition}
\newtheorem{lemma}[definition]{Lemma}
\newtheorem{example}[definition]{Example}
\title{Monodromy at infinity of $A$-hypergeometric 
functions and toric compactifications 
\footnote{{\bf 2000 Mathematics 
Subject Classification: }14M25, 
32S40, 32S60, 
33C70, 35A27}}
\author{Kiyoshi \textsc{Takeuchi}
\footnote{Institute of Mathematics, University  of 
Tsukuba, 1-1-1, Tennodai, 
Tsukuba, Ibaraki, 305-8571, Japan. 
E-mail: takemicro@nifty.com }}
\date{}
\begin{document}

\maketitle

\begin{abstract}
We study $A$-hypergeometric functions 
introduced by Gelfand-Kapranov-Zelevinsky 
\cite{G-K-Z-1} and 
prove a formula for the eigenvalues of 
their monodromy automorphisms 
defined by the analytic continuaions 
along large loops 
contained in complex lines parallel to 
the coordinate axes. The method of toric 
compactifications introduced in \cite{L-S} 
and \cite{M-T-3} will be used to prove 
our main theorem. 
\end{abstract}

\section{Introduction}\label{sec:1}

The theory of $A$-hypergeometric systems 
introduced by Gelfand-Kapranov-Zelevinsky 
\cite{G-K-Z-1} is an ultimate generalization 
of that of classical hypergeometric differential 
equations. As in the case of hypergeometric 
equations, the holomorphic solutions to 
$A$-hypergeometric systems admit power series 
expansions \cite{G-K-Z-1} and integral 
representations \cite{G-K-Z-2}. Moreover 
this theory has very deep connections with 
other fields of mathematics, such as 
toric varieties, projective duality, 
period integrals, mirror symmetry and 
combinatorics. Also from the viewpoint of 
$\D$-module theory (\cite{Dimca}, 
\cite{H-T-T} and \cite{K-S} etc.), 
$A$-hypergeometric 
$\D$-modules are very 
elegantly constructed 
in \cite{G-K-Z-2}. For the recent 
development of this subject see 
\cite{S-W} etc. However, to the best of 
our knowledge, it seems that the monodromy 
representations of their solutions i.e. 
$A$-hypergeometric functions are not 
completely studied yet. One of the most 
successful attempts toward the understanding 
of these monodromy representations would 
be Borisov-Horja's Mellin-Barnes type connection 
formulas for $A$-hypergeometric functions 
in \cite{B-H} and \cite{Horja}. 
In this paper, we study 
the monodromy representations in the light 
of the theory of $\D$-modules and 
constructible sheaves. In particular, 
we give a formula for the characteristic 
polynomials of the monodromy automorphisms 
of $A$-hypergeometric functions obtained 
by the analytic continuations along large loops 
contained in complex lines parallel to 
the coordinate axes. Namely we study 
the monodromy at infinity of 
$A$-hypergeometric functions (as for 
the topological monodromy at 
infinity in the theory of 
polynomial maps, see \cite{L-S} 
and \cite{M-T-3} etc.).  Our result 
are general and will be described only by the 
configulation $A$ and the parameter 
$\gamma \in \CC^n$ which are used 
to define the $A$-hypergeometric 
system. 

 In order to give the precise statement 
of our theorem, let us recall the definition 
of $A$-hypergeometric systems in 
\cite{G-K-Z-1} and \cite{G-K-Z-2}. 
Let $A=\{ a(1), a(2), \ldots , 
a(m)\} \subset \ZZ^{n-1}$ be a finite subset 
of the lattice $\ZZ^{n-1}$. Assume that 
$A$ generate $\ZZ^{n-1}$ as an affine lattice. 
Then the convex hull $Q$ of $A$ in 
$\RR^{n-1}_v = \RR \otimes_{\ZZ} \ZZ^{n-1}$ 
is an $(n-1)$-dimensional 
polytope. 
For $j=1,2, \ldots , m$ 
set $\tl{a(j)}:=(a(j),1) 
\in \ZZ^{n}=\ZZ^{n-1} 
\oplus \ZZ$ and consider the 
$n \times m$ integer matrix
\begin{equation}
\tl{A}:=\begin{pmatrix}
 {^t\tl{a(1)}} & {^t\tl{a(2)}} &\cdots& 
{^t\tl{a(m)}} \end{pmatrix}=(a_{ij}) \in M(n, m, \ZZ)
\end{equation}
whose $j$-th column is ${^t\tl{a(j)}}$. 
Then the GKZ hypergeometric 
system on $X=\CC^A=\CC_z^{m}$ 
associated with $A \subset \ZZ^{n-1}$ 
and a parameter $\gamma \in 
\CC^{n}$ is 
\begin{gather}
\left(\sum_{j=1}^{m} a_{ij}
z_j\frac{\partial}{\partial 
z_j}-\gamma_i\right)
 f(z)=0 \hspace{5mm} (1\leq i\leq n), \\
\left\{ \prod_{\mu_j>0} \left(\frac{\partial}{\partial 
z_j}\right)^{\mu_j} -\prod_{\mu_j<0} 
\left(\frac{\partial}{\partial 
z_j}\right)^{-\mu_j} \right\} f(z)=0 
\hspace{5mm} (\mu \in {\rm Ker} \tl{A} \cap 
\ZZ^{m}\setminus \{0\})
\end{gather}
(see \cite{G-K-Z-1} and \cite{G-K-Z-2}). 
Let $\D_{X}$ be the sheaf 
of differential operators with 
holomorphic coefficients 
on $X=\CC_z^{m}$ and set
\begin{eqnarray}
P_i&:=&\sum_{j=1}^{m} a_{ij}z_j
\frac{\partial}{\partial z_j}-\gamma_i 
\hspace{5mm}(1\leq i\leq n),\\
\square_{\mu} 
&:=&\prod_{\mu_j>0}\left(\frac{\partial}{\partial 
z_j}\right)^{\mu_j} -\prod_{\mu_j<0} \left(
\frac{\partial}{\partial 
z_j}\right)^{-\mu_j}\hspace{5mm} (\mu 
\in {\rm Ker} \tl{A} \cap 
\ZZ^{m}\setminus \{0\}).
\end{eqnarray}
Then the coherent $\D_{X}$-module
\begin{equation}
\M_{A, \gamma}=
\D_{X} /( \sum_{1 \leq i \leq n} 
\D_{X} P_i + \sum_{\mu \in {\rm Ker} \tl{A} 
\cap \ZZ^{m}\setminus 
\{0\}}\D_{X} \square_{\mu})
\end{equation}
which corresponds to 
the above system is holonomic 
and its solution complex 
\begin{equation}
{\rm Sol}(\M_{A, \gamma})=\rhom_{\D_X}
(\M_{A, \gamma}, \sho_X) 
\end{equation}
is a local system on an open dense 
subset of $X$. It is well-known 
after \cite{G-K-Z-1} and 
\cite{G-K-Z-2} that the singular 
locus of ${\rm Sol}(\M_{A, \gamma})$ 
is described by the $A$-discriminant 
varieties studied precisely by \cite{G-K-Z} 
(see also \cite{M-T-1} etc.). 

 \par \indent Let us fix an integer 
$j_0 \in \ZZ$ such that 
$1 \leq j_0 \leq m$ and for 
$(c_1, c_2, \ldots , c_{j_0-1}, 
c_{j_0+1}, \ldots , c_m) \in \CC^{m-1}$ 
consider a complex line $L \simeq 
\CC$ in $X=\CC^m_z$ defined by 
\begin{equation}
L=\{ z=(z_1, z_2, \ldots , z_m)  \in \CC^m \ | \ 
z_j=c_j \ \text{for} \ j \not= j_0\} \subset X=\CC^m_z. 
\end{equation}
Since $L$ is a line parallel to the 
$j_0$-th axis $\CC_{z_{j_0}}$ of $X=\CC^m_z$, 
we naturally identify 
it with $\CC_{z_{j_0}}$. For simplicity, 
we denote the $j_0$-th coordinate 
function $z_{j_0} : X=\CC^m_z 
\longrightarrow \CC$ by $s$. Then 
it is well-known that if 
$(c_1, c_2, \ldots , c_{j_0-1}, 
c_{j_0+1}, \ldots , c_m) \in \CC^{m-1}$ 
is generic there exists 
a finite subset $S_L \subset L \simeq 
\CC_s$ such that ${\rm Sol} (\M_{A, \gamma})|_L$ 
is a local system on $L \setminus S_L$. 
Let us take such a line $L$ in 
$X=\CC^m_z$ and a point $s_0 \in 
L \simeq \CC_s$ in $L$ such that 
$|s_0| > \max_{s\in S_L}|s|$.  
Then we obtain a monodromy automorphism 
\begin{equation}
{\rm Sol}(\M_{A, \gamma})_{s_0} 
\simto  
{\rm Sol}(\M_{A, \gamma})_{s_0} 
\end{equation}
defined by the analytic continuation of the 
sections of ${\rm Sol}(\M_{A, \gamma})|_L$ 
along the path 
\begin{equation}
C_{s_0}=\{ s_0\exp( \sqrt{-1} \theta ) 
\ | \ 0 \leq \theta \leq 2 \pi \}
\end{equation}
in $L \simeq \CC_s$. Since the characteristic 
polynomial of this automorphism does 
not depend on $L$ and $s_0 \in L$,  
we denote it simply by 
$\lambda_{j_0}^{\infty}(t) \in \CC [t]$. 
We call $\lambda_{j_0}^{\infty}(t)$ 
the characteristic polynomial of 
the $j_0$-th monodromy at infinity 
of the $A$-hypergeometric functions 
${\rm Sol}(\M_{A, \gamma})$. 
According to the fundamental result of \cite{G-K-Z-1},  
if $\gamma \in \CC^n$ is 
generic (i.e. non-resonant in the sense 
of \cite[Section 2.9]{G-K-Z-2}),  
the rank of the local system 
${\rm Sol}(\M_{A, \gamma})$ 
is equal to the normalized $(n-1)$-dimensional 
volume $\Vol_{\ZZ}(Q) \in \ZZ$ 
of $Q$ with respect to the lattice 
$\ZZ^{n-1}$. See also Saito-Sturmfels-Takayama 
\cite{S-S-T} for the related results. 
Therefore, the degree of the 
the characteristic polynomial 
$\lambda_{j_0}^{\infty}(t)$ is 
$\Vol_{\ZZ}(Q)$. In order to give a formula 
for $\lambda_{j_0}^{\infty}(t) \in 
\CC [t]$ we shall prepare some notations. 
First, we set $\alpha =\gamma_n, \beta_1=
-\gamma_1-1, \beta_2=-\gamma_2-1, \ldots , 
\beta_{n-1}=-\gamma_{n-1}-1$ and 
$\beta =(\beta_1, \beta_2, \ldots , 
\beta_{n-1})\in \CC^{n-1}$ (see 
\cite[Theorem 2.7]{G-K-Z-2}). Next 
let $\Delta_1, \Delta_2, \ldots , 
\Delta_k$ be the $(n-2)$-dimensional 
faces i.e. the facets of $Q$ such that 
$a(j_0) \notin \Delta_r$ 
($r=1,2, \ldots , k$). Then for 
each $r=1,2, \ldots , k$ there exists 
a unique primitive vector $u^r \in 
\ZZ^{n-1}\setminus \{ 0\}$ such that 
\begin{equation}
\Delta_r=\{ v\in Q \ | \ 
\langle u^r , v \rangle =\min_{w\in Q} 
\langle u^r, w \rangle \}. 
\end{equation} 
Let us set 
\begin{eqnarray}
h_r & = & \min_{w\in Q} 
\langle u^r, w \rangle = 
\langle u^r, \Delta_r \rangle \in \ZZ , 
\\ 
d_r & = & \langle u^r, 
a(j_0) \rangle -h_r 
\in \ZZ . 
\end{eqnarray}
Since $-u^r \in \ZZ^{n-1} \subset 
\RR^{n-1}$ is the primitive outer 
conormal vector of the facet 
$\Delta_r \prec Q$ of $Q$ and 
we have 
\begin{equation}
d_r=\langle -u^r, w-a(j_0) \rangle 
\end{equation}
for any $w \in \Delta_r$, the integer 
$d_r$ is the lattice distance of 
the point $a(j_0) \in Q$ from $\Delta_r$. 
In particular, 
we have $d_r > 0$. Finally we set 
\begin{equation}
\delta_r=\alpha h_r + 
\langle \beta , u^r \rangle \in \CC
\end{equation}
for $r=1,2, \ldots , k$. 
Then we obtain the following theorem. 

\begin{theorem}\label{TH1} 
Assume that $\gamma \in \CC^n$ is 
non-resonant in the sense 
of \cite[Section 2.9]{G-K-Z-2}. Then the 
characteristic polynomial 
$\lambda_{j_0}^{\infty}(t)$ of 
the $j_0$-th monodromy at infinity 
of ${\rm Sol}(\M_{A, \gamma})$ is given by 
\begin{equation}
\lambda_{j_0}^{\infty}(t)= 
\prod_{r=1}^k \left\{ t^{d_r} -
\exp (-2\pi \sqrt{-1}\delta_r) 
\right\}^{\Vol_{\ZZ}(\Delta_r)}, 
\end{equation}
where $\Vol_{\ZZ}(\Delta_r) \in \ZZ$ 
is the normalized $(n-2)$-dimensional 
volume of $\Delta_r$. 
\end{theorem}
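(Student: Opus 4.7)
The plan is to use the Euler integral representation of $A$-hypergeometric functions from \cite{G-K-Z-2} and reduce the monodromy at $s=\infty$ to a calculation on a toric compactification of $(\CC^*)^{n-1}$, in the spirit of the method of \cite{L-S} and \cite{M-T-3}. Under the non-resonance assumption, solutions can be written locally as twisted period integrals
\begin{equation}
f(z)=\int_{\Gamma} F_z(x)^{\alpha}\, x_1^{\beta_1}\cdots x_{n-1}^{\beta_{n-1}}\,\frac{dx_1\cdots dx_{n-1}}{x_1\cdots x_{n-1}},
\end{equation}
with $F_z(x)=\sum_{j=1}^{m} z_j x^{a(j)}$ the universal Laurent polynomial and $\Gamma$ a twisted cycle. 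Consequently, the restriction of $\mathrm{Sol}(\M_{A,\gamma})$ to $L$ is computed by the direct image under the projection $(\CC^*)^{n-1}\times L \to L$ of the rank one local system determined by $F_z^{\alpha}\prod x_i^{\beta_i}$, and its monodromy at $s=\infty$ is captured by a nearby cycle functor on a suitable compactification.

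Fixing generic $c_j$ ($j\neq j_0$) and writing $s=z_{j_0}$, the resulting one-parameter family is $F_s(x)=g(x)+s\,x^{a(j_0)}$, $g(x)=\sum_{j\neq j_0} c_j x^{a(j)}$, whose Newton polytope equals $Q$ for $s\neq 0$. I would compactify $(\CC^*)^{n-1}$ to the projective toric variety $X_Q$, whose toric divisors at infinity $D_\Delta$ are indexed by the facets $\Delta\prec Q$, and extend the family to a proper morphism $\mathcal{X}\to\PP^1_s$ after refining the fan so that the total space has normal crossings near $s=\infty$. The monodromy at $s=\infty$ is then computed by the nearby cycle functor on $\mathcal{X}$, and, by the toric compactification machinery of \cite{M-T-3}, splits multiplicatively into local contributions indexed by the facets of $Q$.

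The key geometric point is that only facets $\Delta_r$ with $a(j_0)\notin\Delta_r$ contribute non-trivially: when $a(j_0)\in\Delta$, the term $s\,x^{a(j_0)}$ lies in the leading stratum along $D_\Delta$ and the local family is tame in $s$. For $\Delta_r$ with $a(j_0)\notin\Delta_r$, I would choose toric coordinates near $D_{\Delta_r}$ adapted to the primitive vector $u^r$ and substitute $s=1/t$. The balance between rescaling $s$ and the coordinate in the $u^r$-direction forces a ramified base change of degree $d_r=\langle u^r,a(j_0)\rangle-h_r$, while the residue of $F_z^{\alpha}\prod x_i^{\beta_i}$ along $D_{\Delta_r}$ is precisely $\delta_r=\alpha h_r+\langle\beta,u^r\rangle$. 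After the base change the local monodromy acts on the twisted middle cohomology of the facet stratum, whose rank by GKZ applied to the $(n-2)$-dimensional subproblem $A\cap\Delta_r$ is $\Vol_{\ZZ}(\Delta_r)$. Multiplying these local pieces over all $r$ yields the formula, and the consistency check $\sum_r d_r\Vol_{\ZZ}(\Delta_r)=\Vol_{\ZZ}(Q)$ (the pyramid decomposition of $Q$ from the apex $a(j_0)$) confirms that the degree equals the rank of $\mathrm{Sol}(\M_{A,\gamma})$.

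The main obstacle will be the last step: performing the local analysis near each $D_{\Delta_r}$ so as to read off the ramification index $d_r$ and the eigenvalue $\exp(-2\pi\sqrt{-1}\delta_r)$ simultaneously and consistently, and verifying that facets containing $a(j_0)$ indeed give trivial contributions. The non-resonance hypothesis on $\gamma$ is essential throughout, both to ensure that each facet-level subsystem has full rank $\Vol_{\ZZ}(\Delta_r)$ and to preclude degeneracies in the twisted cohomology that would spoil the multiplicities.
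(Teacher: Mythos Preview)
Your strategy is in the same spirit as the paper's---Euler integral representation from \cite{G-K-Z-2}, toric compactification, and a facet-by-facet decomposition---but the paper makes different technical choices that are precisely what dissolves the ``main obstacle'' you flag at the end.

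The most significant difference is that the paper compactifies the \emph{full} $n$-torus $T=(\CC^*)^{n-1}_x\times\CC^*_s$ rather than only the fibre $(\CC^*)^{n-1}$. One takes the Newton polytope $\tl P\subset\RR^n$ of $p(x,s)=sx^{a(j_0)}+\sum_{j\neq j_0}c_jx^{a(j)}$ as a Laurent polynomial in all $n$ variables, intersects its dual subdivision with the octant fan $\Sigma_2$ (this is what guarantees that $s$ extends without indeterminacy to a proper morphism $g:X_\Sigma\to\PP^1$), and refines to a smooth complete fan. This sidesteps the step ``refine the fan so that the total space has normal crossings near $s=\infty$'' that you leave unspecified.

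Second, the paper does not attempt to track the monodromy operator itself but computes the monodromy \emph{zeta function} $\zeta_{h,\infty}$ for $h=1/s$, using its compatibility with proper push-forward (Proposition~\ref{prp:2-9}). The local analysis is then entirely handled by two ready-made lemmas from \cite{M-T-2}: Proposition~\ref{prp:5-3} says the zeta function of $j_!\LL$ for a monomial with at least two positive exponents is identically $1$, which kills all $T$-orbits of codimension $\ge 2$ in $g^{-1}(\infty)$; and Proposition~\ref{prp:5-2} gives $\det(\id-t^mA)$ on a codimension-one orbit, from which one reads off the exponent $d_r=-\langle\tl u^r,(0,\ldots,0,1)\rangle$ and the eigenvalue $\exp(2\pi\sqrt{-1}\delta_r)$ simultaneously in the affine chart, with no ramified base change needed.

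Third, the multiplicity $\Vol_\ZZ(\Delta_r)$ is obtained not by invoking a lower-dimensional GKZ system on $A\cap\Delta_r$, but directly from the Bernstein--Khovanskii--Kushnirenko theorem: $\chi\bigl(T_{\sigma_r}\setminus\{p_{\sigma_0}=0\}\bigr)=(-1)^{n-1}\Vol_\ZZ(\tl\Delta_r)=(-1)^{n-1}\Vol_\ZZ(\Delta_r)$. The same BKK argument gives $\chi=0$ whenever the supporting face $\Gamma(\tl P;\tl u(\sigma))$ has dimension $<n-1$, and this is how all remaining one-dimensional cones in $H_{<0}$ (in particular those not arising from the facets $\Delta_1,\ldots,\Delta_k$) are shown to contribute trivially. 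Your proposed justification via non-resonance and a facet subproblem is therefore not how the argument runs; non-resonance is used only at the outset, to invoke the isomorphism $\Sol(\M_{A,\gamma})|_L\simeq R\pi_!\LL[n-1]$ and to know that this is a local system of the expected rank.

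In summary, your plan is sound in outline, but the paper's implementation---compactify in dimension $n$, compute zeta functions rather than monodromy matrices, and use Propositions~\ref{prp:5-2}, \ref{prp:5-3} together with BKK---is what turns the heuristic local picture into an actual computation.
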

For the proof of this theorem, we will 
use some sheaf-theoretical methods 
such as nearby and constructible 
sheaves, and a toric compactification of 
the algebraic torus 
$T=(\CC^*)^n \subset (\CC^*)^{n-1} 
\times L$ similar to the one 
used in the study of 
topological monodromy at infinity 
of polynomial maps in \cite{L-S} 
and \cite{M-T-3}. As well as 
Bernstein-Khovanskii-Kushnirenko's 
theorem \cite{Khovanskii}, the general 
results on nearby cycle sheaves 
of local systems obtained 
in \cite[Section 5]{M-T-2} will 
play an important role in the proof. 

\Vspace 
\par \indent Finally, the author 
would like to express his hearty 
gratitude to Professors Y. Haraoka, Y. Matsui 
and N. Takayama for some very useful discussions 
on this subject.

\section{Preliminary 
notions and results}\label{sec:2}

In this section, we introduce basic 
notions and results which will be used 
in the proof of our main theorem. We 
essentially follow the terminology of 
\cite{Dimca}, \cite{H-T-T} and \cite{K-S}. 
For example, for a topological space $X$ we 
denote by $\Db(X)$ the derived 
category whose objects are bounded complexes 
of sheaves of $\CC_X$-modules on $X$.

\begin{definition}\label{dfn:2-1}
Let $X$ be an algebraic variety over $\CC$. Then
\begin{enumerate}
\item We say that a sheaf $\F$ on $X$ 
is constructible if there exists a 
stratification $X=\bigsqcup_{\alpha} 
X_{\alpha}$ of $X$ such 
that $\F|_{X_{\alpha}}$ is a locally 
constant sheaf of finite rank for any $\alpha$.
\item We say that an object $\F$ 
of $\Db(X)$ is constructible if the 
cohomology sheaf $H^j(\F)$ of 
$\F$ is constructible for any $j \in \ZZ$. We 
denote by $\Dbc(X)$ the full 
subcategory of $\Db(X)$ consisting of 
constructible objects $\F$.
\end{enumerate}
\end{definition}

Recall that for any morphism 
$f \colon X \longrightarrow Y$ of algebraic 
varieties over $\CC$ there exists a functor
\begin{equation}
Rf_* \colon \Db(X) \longrightarrow \Db(Y)
\end{equation}
of direct images. 
This functor preserves the 
constructibility and we obtain also a functor
\begin{equation}
Rf_* \colon \Dbc(X) 
\longrightarrow \Dbc(Y).
\end{equation}
For other basic 
operations $Rf_!$, $f^{-1}$, $f^!$ etc. in derived 
categories, see \cite{K-S} for the detail.

Next we introduce the 
notion of constructible functions.

\begin{definition}\label{dfn:2-2}
Let $X$ be an algebraic variety 
over $\CC$ and $G$ an abelian group. Then we 
say a $G$-valued function $\rho \colon X 
\longrightarrow G$ on $X$ is 
constructible if there exists a 
stratification $X=\bigsqcup_{\alpha} 
X_{\alpha}$ of $X$ such that 
$\rho|_{X_{\alpha}}$ is constant 
for any $\alpha$. We denote by $\CF_G(X)$ 
the abelian group of $G$-valued 
constructible functions on $X$.
\end{definition}

Let $\CC(t)^*=\CC(t) \setminus \{0\}$ be 
the multiplicative group of the 
function field $\CC(t)$ of the scheme 
$\CC$. In this paper, we consider 
$\CF_G(X)$ only for $G=\ZZ$ or $\CC(t)^*$. 
For a $G$-valued constructible 
function $\rho \colon X 
\longrightarrow G$, by taking a stratification 
$X=\bigsqcup_{\alpha}X_{\alpha}$ of $X$ 
such that $\rho|_{X_{\alpha}}$ is 
constant for any $\alpha$ as above, we set
\begin{equation}
\int_X \rho :=\dsum_{\alpha}
\chi(X_{\alpha}) \cdot \rho(x_{\alpha}) \in G,
\end{equation}
where $x_{\alpha}$ is a reference 
point in $X_{\alpha}$. Then we can easily 
show that $\int_X\rho \in G$ 
does not depend on the choice of the 
stratification $X=\bigsqcup_{\alpha} 
X_{\alpha}$ of $X$. Hence we obtain a 
homomorphism
\begin{equation}
\int_X \colon \CF_G(X) \longrightarrow G
\end{equation}
of abelian groups. For $\rho 
\in \CF_G(X)$, we call $\int_X \rho \in G$ the 
topological (Euler) integral of 
$\rho$ over $X$. More generally, for any 
morphism $f \colon X \longrightarrow 
Y$ of algebraic varieties over $\CC$ 
and $\rho \in \CF_G(X)$, we 
define the push-forward $\int_f \rho \in 
\CF_G(Y)$ of $\rho$ by
\begin{equation}
\left( \int_f \rho \right) (y):=
\int_{f^{-1}(y)} \rho
\end{equation}
for $y \in Y$. This defines a homomorphism
\begin{equation}
\int_f \colon \CF_G(X) \longrightarrow \CF_G(Y)
\end{equation}
of abelian groups.

Among various operations in derived 
categories, the following nearby and 
vanishing cycle functors 
introduced by Deligne will be frequently used in 
this paper (see \cite[Section 4.2]{Dimca} 
for an excellent survey of this 
subject). Let $f: X \longrightarrow \CC$ 
be a regular function on an algebraic 
variety $X$ over $\CC$  
and set $X_0 =\{ x\in X \ | \ 
f(x)=0 \}$. Then there exist functors 
\begin{equation}
\psi_f, \ \varphi_f \colon 
\Dbc(X) \longrightarrow \Dbc(X_0).
\end{equation}
which are called 
the nearby and vanishing cycle 
functors of $f$ respectively. 
As we see in the next proposition, 
the nearby cycle functor $\psi_f$ 
generalizes the classical notion of 
Milnor fibers. First, let us recall the 
definition of Milnor fibers and 
Milnor monodromies over singular varieties 
(see for example \cite{Takeuchi} 
for a review on this subject). Let $X$ be a 
subvariety of $\CC^m$ and $f 
\colon X \longrightarrow \CC$ a non-constant 
regular function on $X$. Namely 
we assume that there exists a polynomial 
function $\tl{f} \colon \CC^m 
\longrightarrow \CC$ on $\CC^m$ such that 
$\tl{f}|_X=f$. For simplicity, 
assume also that the origin $0 \in \CC^m$ is 
contained in $X_0=\{x \in X \ 
|\ f(x)=0\}$. Then the following lemma is 
well-known (see for example 
\cite{Milnor} and 
\cite[Definition 1.4]{Massey}).

\begin{lemma}\label{lem:2-5}
For sufficiently small $\e >0$, 
there exists $\eta_0 >0$ with $0<\eta_0 \ll 
\e$ such that for $0 < \forall 
\eta <\eta_0$ the restriction of $f$:
\begin{equation}
X \cap B(0;\e) \cap \tl{f}^{-1}
(D(0;\eta) \setminus \{0\}) \longrightarrow 
D(0;\eta) \setminus \{0\}
\end{equation}
is a topological fiber bundle 
over the punctured disk $D(0;\eta) \setminus 
\{0\}=\{ z \in \CC \ |\ 0<|z|<\eta\}$, 
where $B(0;\e)$ is the open ball in 
$\CC^m$ with radius $\e$ centered at the origin.
\end{lemma}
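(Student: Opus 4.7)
My plan is to establish this Milnor-type fibration by exhibiting $f$ as a proper stratified submersion onto the punctured disk and then invoking Thom's first isotopy lemma. This is the standard route to Milnor fibrations over singular ambient spaces (cf. \cite{Dimca}, \cite{K-S}, \cite{Massey}).

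First, I would fix a Whitney stratification $\{X_\alpha\}$ of the complex analytic subvariety $X \subset \CC^m$ that is compatible with $X_0 = f^{-1}(0)$, so that $X_0$ is a union of strata; refining it if necessary, I may also require the Thom $a_f$-condition along $X_0$. Then, using the Curve Selection Lemma applied stratum by stratum, I would show that for every stratum $X_\alpha$ whose closure contains the origin the squared distance function $\rho(x) = \|x\|^2$ has no critical points on $X_\alpha \cap (B(0;\e) \setminus \{0\})$ provided $\e > 0$ is sufficiently small. Equivalently, the sphere $\partial B(0;\e)$ meets every stratum transversally whenever $\e$ is small enough; fix such an $\e$.

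Next, I would choose $\eta_0$ with $0 < \eta_0 \ll \e$ so that for all $\eta \in (0,\eta_0)$ two conditions hold: (i) the restriction of $f$ to each stratum $X_\alpha \not\subset X_0$ has no critical value in $D(0;\eta)\setminus\{0\}$, which follows because critical values of $f|_{X_\alpha}$ are isolated near $0\in\CC$ (again by Curve Selection on each stratum); and (ii) on the boundary piece $X \cap \partial B(0;\e) \cap \tl{f}^{-1}(D(0;\eta)\setminus\{0\})$, the restriction of $f$ to each stratum of the induced Whitney stratification is also a submersion onto $D(0;\eta)\setminus\{0\}$. Condition (ii) follows from the sphere transversality at $\eta = 0$ combined with the Thom $a_f$-condition, which propagates transversality to a small disk neighborhood of the central fiber --- this is precisely where the inequality $\eta_0 \ll \e$ is used.

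Finally, the map
\begin{equation*}
f : X \cap \overline{B(0;\e)} \cap \tl{f}^{-1}(\overline{D(0;\eta)} \setminus \{0\}) \longrightarrow \overline{D(0;\eta)} \setminus \{0\}
\end{equation*}
is then a proper stratified submersion, and Thom's first isotopy lemma supplies a locally trivial topological fiber bundle structure; restricting to the open ball $B(0;\e)$ and to the open punctured disk $D(0;\eta)\setminus\{0\}$ gives the statement. The main obstacle, as I see it, is arranging condition (ii) uniformly: one must ensure that transversality of the strata to the sphere $\partial B(0;\e)$ at $\eta=0$ persists throughout the small punctured disk. The existence of Whitney stratifications of complex analytic maps satisfying the $a_f$-condition (Hironaka, L\^e--Teissier, Brian\c{c}on--Maisonobe--Merle) is the crucial input that makes this step go through, and it is the only serious ingredient from stratification theory that the argument really requires.
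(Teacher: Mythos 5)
Your argument is correct and coincides with the proof the paper itself relies on: the paper states Lemma \ref{lem:2-5} without proof, referring to \cite{Milnor} and \cite[Definition 1.4]{Massey}, and the standard proof in that literature is exactly your route --- a Whitney stratification of $X$ refined to satisfy the Thom $a_f$-condition along $X_0$, transversality of the small sphere $\partial B(0;\e)$ to all strata via the Curve Selection Lemma, persistence of this transversality for nearby fibers $\tl{f}^{-1}(t)$ with $0<|t|<\eta_0$, and Thom's first isotopy lemma applied to the resulting proper stratified submersion. The only step worth spelling out is the last one: the isotopy lemma gives local triviality, which over the connected base $D(0;\eta)\setminus\{0\}$ yields the asserted fiber bundle, and since the trivializing homeomorphisms preserve strata, the restriction from the closed ball to the open ball $B(0;\e)$ is again locally trivial.
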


\begin{definition}\label{dfn:2-6}
A fiber of the above fibration is 
called the Milnor fiber of $f 
\colon X\longrightarrow \CC$ at $0 
\in X$ and we denote it by $F_0$.
\end{definition}
Similarly, for $x \in X_0$ we define 
 the Milnor fiber $F_x$ of $f$ at $x$. 

\begin{proposition}{\rm 
\bf(\cite[Proposition 
4.2.2]{Dimca})}\label{prp:2-7} 
For any $x \in X_0$ and $j \in \ZZ$ there 
exists a natural isomorphism
\begin{equation}
H^j(F_x;\CC) \simeq H^j(\psi_f(\CC_X))_x. 
\end{equation}
\end{proposition}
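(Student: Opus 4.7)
The plan is to unfold Deligne's definition of the nearby cycle functor $\psi_f$ and to identify its stalk at $x\in X_0$ with the cohomology of the Milnor fiber $F_x$, by invoking the topological fibration recorded in Lemma~\ref{lem:2-5}. Recall that
\[
\psi_f\F \;=\; i^{-1}Rk_*k^{-1}\F,
\]
where $i\colon X_0\hookrightarrow X$ is the closed inclusion, $X^*:=f^{-1}(\CC^*)$, $j\colon X^*\hookrightarrow X$ is the open inclusion, $e\colon E\longrightarrow \CC^*$ is the exponential universal cover (with $E=\CC$), and $k=j\circ q\colon X^*\times_{\CC^*}E\longrightarrow X$ with $q\colon X^*\times_{\CC^*}E\to X^*$ the projection. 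Applied to $\F=\CC_X$ one has $k^{-1}\CC_X=\CC_{X^*\times_{\CC^*}E}$, and the stalk-at-$x$ formula unwinds to
\[
H^j\bigl(\psi_f(\CC_X)\bigr)_x \;\simeq\; \varinjlim_{U\ni x}H^j\bigl((U\cap X^*)\times_{\CC^*}E;\,\CC\bigr),
\]
the limit running over open neighborhoods $U$ of $x$ in $X$.

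First I would fix a cofinal system of neighborhoods compatible with the Milnor fibration, by translating Lemma~\ref{lem:2-5} from the origin to $x$. There exist $0<\eta\ll\e\ll 1$ such that
\[
f\colon U^*_{\e,\eta}:=X\cap B(x;\e)\cap \tl{f}^{-1}\bigl(D(0;\eta)\setminus\{0\}\bigr)\longrightarrow D(0;\eta)\setminus\{0\}
\]
is a topological fiber bundle with fiber $F_x$, where $\tl{f}$ is the polynomial extension of $f$ to $\CC^m$. The sets $U_{\e,\eta}=X\cap B(x;\e)\cap \tl{f}^{-1}(D(0;\eta))$ then form a cofinal system of open neighborhoods of $x$ in $X$, and $U_{\e,\eta}\cap X^*=U^*_{\e,\eta}$.

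Next I would exploit contractibility of the universal cover. Let $E_\eta\subset E$ denote the preimage of $D(0;\eta)\setminus\{0\}$ under $e$; this is biholomorphic to an open half-plane and hence contractible. Then
\[
(U_{\e,\eta}\cap X^*)\times_{\CC^*}E \;=\; U^*_{\e,\eta}\times_{D(0;\eta)\setminus\{0\}}E_\eta
\]
is a topological fiber bundle over the contractible base $E_\eta$ with fiber $F_x$, so it is trivializable and in particular homotopy equivalent to $F_x$. Consequently
\[
H^j\bigl((U_{\e,\eta}\cap X^*)\times_{\CC^*}E;\,\CC\bigr) \;\simeq\; H^j(F_x;\CC)
\]
naturally in $j$, for every admissible pair $(\e,\eta)$.

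To finish I would take the direct limit over the cofinal system $\{U_{\e,\eta}\}$. When $(\e',\eta')$ refines $(\e,\eta)$ the inclusion $U_{\e',\eta'}\hookrightarrow U_{\e,\eta}$ restricts to a map of fiber bundles whose effect on fibers is a homotopy equivalence, because of the conic structure of the analytic germ $(X,x)$ which implies that the Milnor fiber is well-defined up to homotopy. Therefore every transition map in the direct system is an isomorphism, the limit computes $H^j(F_x;\CC)$, and one obtains the desired natural isomorphism $H^j(\psi_f(\CC_X))_x\simeq H^j(F_x;\CC)$. The hard part will be precisely this stability-under-refinement: unpacking the definitions is routine and the contractible-base argument is standard, but one must invoke the conic structure of $(X,x)$ (or equivalently the uniqueness of the Milnor fiber up to homotopy equivalence for sufficiently small admissible parameters) to guarantee that the direct system stabilizes rather than producing a more complicated colimit.
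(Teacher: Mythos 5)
The paper offers no proof of this proposition at all: it is quoted verbatim from \cite[Proposition 4.2.2]{Dimca}, so the only meaningful comparison is with the standard argument given in that reference, and your proposal is essentially that argument --- unfolding Deligne's definition $\psi_f\F=i^{-1}Rk_*k^{-1}\F$, computing the stalk as a colimit over the Milnor tubes $U_{\e,\eta}$ furnished by Lemma~\ref{lem:2-5}, using contractibility of the half-plane $E_\eta$ to trivialize the pulled-back bundle and identify each term of the colimit with $H^j(F_x;\CC)$, and then stabilizing the direct system. Your argument is correct, and you have rightly isolated the one genuinely delicate step: on a singular $X$ the claim that the transition maps $U_{\e',\eta'}\hookrightarrow U_{\e,\eta}$ induce isomorphisms does not follow from Lemma~\ref{lem:2-5} alone (which fixes a single pair $(\e,\eta)$) but requires the local conic structure of the germ, equivalently the parameter-independence of the Milnor fibration over singular varieties (L\^e's fibration theorem), exactly as in the cited proof.
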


By this proposition, we can study 
the cohomology groups $H^j(F_x;\CC)$ of 
the Milnor fiber $F_x$ by using sheaf 
theory. Recall also that in the above 
situation, as in 
the case of polynomial functions over 
$\CC^n$ (see \cite{Milnor}), we can 
define the Milnor monodromy operators
\begin{equation}
\Phi_{j,x} \colon H^j(F_x ;\CC) 
\overset{\sim}{\longrightarrow} H^j(F_x ;\CC)
 \qquad \ 
(j=0,1,\ldots)
\end{equation}
and the zeta-function
\begin{equation}
\zeta_{f,x}(t):=\prod_{j=0}^{\infty} 
\det \left( \id -t\Phi_{j,x} \right)^{(-1)^j}
\end{equation}
associated with it. Since the above 
product is in fact finite, 
$\zeta_{f,x}(t)$ is a rational 
function of $t$ and its degree in $t$ is the 
topological Euler characteristic 
$\chi(F_x)$ of the Milnor fiber $F_x$. 
For the explicit formulas of 
$\zeta_{f,x}(t)$ and $\chi (F_x)$, 
see \cite{Kushnirenko}, 
\cite{Milnor}, \cite{Varchenko} and 
\cite{M-T-2} etc. 
This classical notion of Milnor monodromy 
zeta functions can be also generalized 
as follows.

\begin{definition}\label{dfn:2-8}
Let $f \colon X \longrightarrow \CC$ be 
a non-constant regular function on 
$X$ and $\F \in \Dbc(X)$. Set $X_0 
:=\{x\in X\ |\ f(x)=0\}$. Then there 
exists a monodromy automorphism
\begin{equation}
\Phi(\F) \colon \psi_f(\F) \simto \psi_f(\F)
\end{equation}
of $\psi_f(\F)$ in $\Dbc(X_0)$ 
(see \cite[Section 4.2]{Dimca}). We 
define a $\CC(t)^*$-valued constructible 
function $\zeta_f(\F) \in 
\CF_{\CC(t)^*}(X_0)$ on $X_0$ by
\begin{equation}
\zeta_{f,x}(\F)(t):=\prod_{j \in \ZZ} 
\det \left\{ \id -t\Phi(\F)_{j,x} 
\right\}^{(-1)^j}
\end{equation}
for $x \in X_0$, where $\Phi(\F)_{j,x} 
\colon (H^j(\psi_f(\F)))_x \simto 
(H^j(\psi_f(\F)))_x$ is the stalk at 
$x \in X_0$ of the sheaf homomorphism
\begin{equation}
\Phi(\F)_j \colon H^j(\psi_f(\F)) 
\simto H^j(\psi_f(\F))
\end{equation}
induced by $\Phi(\F)$.
\end{definition}

The following proposition will 
play an important role in the proof of 
our main theorem. 
For the proof, see for example, 
\cite[p.170-173]{Dimca} and \cite{Schurmann}.

\begin{proposition}\label{prp:2-9}
Let $\pi \colon Y \longrightarrow X$ be 
a proper morphism of algebraic 
varieties over $\CC$ and $f \colon X 
\longrightarrow \CC$ a non-constant 
regular function on $X$. Set $g:=f 
\circ \pi \colon Y \longrightarrow \CC$, 
$X_0:=\{x\in X\ |\ f(x)=0\}$ and 
$Y_0:=\{y\in Y\ |\ g(y)=0\}=\pi^{-1}(X_0)$. 
Then for any $\G\in \Dbc(Y)$ we have
\begin{equation}
\int_{\pi|_{Y_0}} \zeta_g(\G) =\zeta_f(R\pi_*\G)
\end{equation}
in $\CF_{\CC(t)^*}(X_0)$, where
\begin{equation}
\int_{\pi|_{Y_0}}\colon \CF_{\CC(t)^*}
(Y_0) \longrightarrow 
\CF_{\CC(t)^*}(X_0)
\end{equation}
is the push-forward of $\CC(t)^*$-valued 
constructible functions by 
$\pi|_{Y_0} \colon Y_0 \longrightarrow X_0$.
\end{proposition}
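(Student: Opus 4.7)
The plan is to reduce the identity to two classical ingredients: proper base change for nearby cycles, and a multiplicative Euler--Poincar\'e formula for $\CC(t)^*$-valued constructible functions. After these reductions, both sides of the asserted equality evaluate at $x\in X_0$ to the same explicit product $\prod_{j}\det(\id-t\Phi\mid H^j(F_x;\psi_g(\G)|_{F_x}))^{(-1)^j}$, where $F_x:=(\pi|_{Y_0})^{-1}(x)$.

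For the first step, I would invoke proper base change for nearby cycles. Since $\pi$ is proper and $g=f\circ\pi$, the canonical natural transformation gives an isomorphism $\psi_f\circ R\pi_*\simeq R(\pi|_{Y_0})_*\circ\psi_g$ in $\Dbc(X_0)$ (cf.\ \cite[Proposition 4.2.11]{Dimca}, \cite{K-S}), equivariant for the monodromy automorphisms on each side. Taking the stalk at $x\in X_0$ and applying proper base change for the proper morphism $\pi|_{Y_0}$, one obtains an isomorphism $H^j(\psi_f(R\pi_*\G))_x\simeq H^j(F_x;\psi_g(\G)|_{F_x})$ for all $j\in\ZZ$, compatible with $\Phi(R\pi_*\G)_{j,x}$ on the left and the monodromy on hypercohomology induced by $\Phi(\G)|_{F_x}$ on the right. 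Hence $\zeta_{f,x}(R\pi_*\G)(t)=\prod_{j\in\ZZ}\det(\id-t\Phi(\G)|_{F_x}\mid H^j(F_x;\psi_g(\G)|_{F_x}))^{(-1)^j}$.

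For the second step, I would apply the multiplicative Euler--Poincar\'e formula: for any object $\mathcal{H}\in\Dbc(V)$ on an algebraic variety $V$ equipped with a compatible automorphism $\sigma$, one has
\[
\prod_{j\in\ZZ}\det(\id-t\sigma\mid H^j(V;\mathcal{H}))^{(-1)^j}\;=\;\int_V \rho_{\mathcal{H},\sigma},\quad \rho_{\mathcal{H},\sigma}(y):=\prod_{j\in\ZZ}\det(\id-t\sigma_y\mid H^j(\mathcal{H})_y)^{(-1)^j}.
\]
This is proved by choosing a Whitney stratification $V=\bigsqcup_\alpha S_\alpha$ adapted to $(\mathcal{H},\sigma)$: on each stratum the integrand $\rho_{\mathcal{H},\sigma}$ is a constant rational function of $t$, contributing $\rho_{\mathcal{H},\sigma}(y_\alpha)^{\chi(S_\alpha)}$ to the topological integral, while on the left-hand side the filtration by open unions of strata yields distinguished triangles across which $\prod_j\det(\id-t\sigma\mid\cdot)^{(-1)^j}$ is multiplicatively additive. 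Applying this to $V=F_x$, $\mathcal{H}=\psi_g(\G)|_{F_x}$, $\sigma=\Phi(\G)|_{F_x}$, the pointwise integrand is exactly $\zeta_{g,y}(\G)(t)$, so the right-hand side becomes $\int_{F_x}\zeta_g(\G)=\bigl(\int_{\pi|_{Y_0}}\zeta_g(\G)\bigr)(x)$.

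Combining the two steps gives $\zeta_{f,x}(R\pi_*\G)(t)=\bigl(\int_{\pi|_{Y_0}}\zeta_g(\G)\bigr)(x)$ for every $x\in X_0$, which is the desired equality in $\CF_{\CC(t)^*}(X_0)$. I expect the main obstacle to be the equivariance in proper base change for nearby cycles: although the underlying isomorphism $\psi_f\circ R\pi_*\simeq R(\pi|_{Y_0})_*\circ\psi_g$ is classical, verifying that it intertwines $\Phi(R\pi_*\G)$ with the push-forward of $\Phi(\G)$ requires unwinding the definition of the monodromy as the limit over punctured disks around $0\in\CC$ and invoking the functoriality of this limit with respect to proper morphisms. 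A formal but delicate subsidiary point is the sign bookkeeping needed to confirm that $\prod_j\det(\id-t\sigma\mid\cdot)^{(-1)^j}$ is genuinely multiplicative across distinguished triangles, which is what closes up the induction on the stratification in the Euler--Poincar\'e step.
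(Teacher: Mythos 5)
Your argument is correct and is essentially the proof the paper itself defers to: the paper gives no proof of Proposition \ref{prp:2-9}, citing \cite[p.170-173]{Dimca} and \cite{Schurmann}, where exactly your two ingredients appear --- the isomorphism $\psi_f \circ R\pi_* \simeq R(\pi|_{Y_0})_* \circ \psi_g$ for proper $\pi$, compatible with the monodromy automorphisms, followed by the multiplicative Euler--Poincar\'e formula over a stratification. Two small points in your favor: since $\pi$ is proper each fiber $F_x$ is compact, so your use of ordinary hypercohomology on $F_x$ is harmless (it agrees with compactly supported hypercohomology, which is what the open-closed triangles in your induction actually compute), and the stratum contribution $\rho_{\mathcal{H},\sigma}(y_\alpha)^{\chi(S_\alpha)}$ is the standard lemma that for a local system with an automorphism commuting with the monodromy the global determinant is the stalk determinant raised to the Euler characteristic.
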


For the proof of the following 
propositions, see 
\cite[Proposition 5.2 
and 5.3]{M-T-2}.  

\begin{proposition}\label{prp:5-2}
(\cite[Proposition 5.2 (iii)]{M-T-2}) 
Let $\LL$ be a local system of 
rank $r>0$ on $\CC^*=\CC\setminus \{0\}$. 
Denote by $A \in GL_r(\CC)$ the 
monodromy matrix of $\LL$ along the loop 
$\{e^{i\theta}\ |\ 0 \leq 
\theta \leq 2\pi\}$ in $\CC^*$, which is defined 
up to conjugacy. Let $j 
: \CC^* \hookrightarrow \CC$ be the 
inclusion and $h$ a 
function on $\CC$ defined by $h(z)=z^m$ ($m\in 
\ZZ_{>0}$) for $z \in \CC$. Then we have
\begin{equation}\label{eq:5-9}
\zeta_{h,0}(j_!\LL)(t) 
=\det (\id -t^mA) \in \CC(t)^*.
\end{equation}
\end{proposition}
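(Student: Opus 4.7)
The plan is to compute the stalk $\psi_h(j_!\LL)_0$ and the monodromy acting on it directly from the geometry of the branched cover $h(z) = z^m$, then extract the determinant. First I would identify the Milnor fiber $F_0$ of $h$ at the origin: for $0 < \eta \ll \e$, the preimage $h^{-1}(\eta) \cap B(0;\e)$ is the set of $m$ points $z_k = \eta^{1/m} \exp(2\pi\sqrt{-1}k/m)$ for $k = 0, 1, \ldots, m-1$. All of these lie in $\CC^*$, where $j_!\LL$ coincides with $\LL$; consequently, by the sheaf-theoretic Milnor fiber description (the natural generalization of Proposition \ref{prp:2-7} from $\CC_X$ to arbitrary constructible complexes), the stalk $\psi_h(j_!\LL)_0$ is concentrated in degree $0$ and equal to $\bigoplus_{k=0}^{m-1}\LL_{z_k}$, a vector space of total dimension $rm$.

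Next I would describe the monodromy automorphism $\Phi$ on this stalk by lifting the standard loop $t \mapsto \eta \exp(2\pi\sqrt{-1}t)$, $t \in [0,1]$, through the degree-$m$ cover $h$. The lift starting at $z_k$ is $\eta^{1/m}\exp(2\pi\sqrt{-1}(t+k)/m)$, which at $t = 1$ arrives at $z_{k+1}$ (indices read modulo $m$), tracing out an arc of angle $2\pi/m$. The composition of the parallel transports of $\LL$ along the $m$ successive arcs equals the monodromy matrix $A$ of $\LL$ around the full circle $|z| = \eta^{1/m}$. Choosing trivializations $\LL_{z_k} \simeq \CC^r$ for which the first $m-1$ transports are the identity and the last transport carries the factor $A$, I find that $\Phi$ has the block-companion form
\[
\Phi(v_0, v_1, \ldots, v_{m-1}) = (Av_{m-1},\, v_0,\, v_1,\, \ldots,\, v_{m-2}).
\]

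Finally I would evaluate $\det(\id - t\Phi)$ via its eigenvalues. If $Av = \lambda v$ and $\omega^m = \lambda$, then $(v, \omega^{-1}v, \omega^{-2}v, \ldots, \omega^{-(m-1)}v)$ is an eigenvector of $\Phi$ with eigenvalue $\omega$; running $\lambda$ over the $r$ eigenvalues of $A$ and $\omega$ over its $m$ roots accounts for all $rm$ eigenvalues of $\Phi$. Applying the identity $\prod_{\omega^m = \lambda}(1 - t\omega) = 1 - t^m\lambda$, obtained by substituting $X = 1/t$ into $X^m - \lambda = \prod_{\omega^m = \lambda}(X - \omega)$, gives
\[
\det(\id - t\Phi) = \prod_{\lambda}\prod_{\omega^m = \lambda}(1 - t\omega) = \prod_{\lambda}(1 - t^m \lambda) = \det(\id - t^m A).
\]
Because the complex $\psi_h(j_!\LL)_0$ lives only in degree $0$, Definition \ref{dfn:2-8} reduces $\zeta_{h,0}(j_!\LL)(t)$ to exactly this determinant, completing the proof.

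The main obstacle, as I see it, is not the algebra but keeping the orientations and conventions straight: the positive direction of the loop on the base, the direction of parallel transport versus the canonical monodromy of the nearby cycle functor, and the trivialization choice that turns the cyclic permutation of the $m$ sheets into the stated block matrix. A careful but routine sign check is needed to ensure that the geometric monodromy matches the canonical nearby-cycle monodromy, after which the identification $\det(\id - t\Phi) = \det(\id - t^m A)$ holds exactly as claimed.
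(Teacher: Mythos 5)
The paper contains no proof of this proposition: it is quoted from \cite[Proposition 5.2 (iii)]{M-T-2}, with the reader referred to that reference, so the comparison can only be with the expected argument there --- which is precisely the direct local computation you carry out. Your proof is sound in structure: the Milnor fiber of $h(z)=z^m$ at $0$ is the set of $m$ points $z_k=\eta^{1/m}e^{2\pi\sqrt{-1}k/m}$, all contained in $\CC^*$ where $j_!\LL=\LL$; the stalk formula of Proposition \ref{prp:2-7} does hold for an arbitrary object of $\Dbc(X)$, i.e. $(\psi_h(\F))_x\simeq \rsect(F_x;\F|_{F_x})$ (this is how \cite[Proposition 4.2.2]{Dimca} is actually stated), so your appeal to the ``natural generalization'' is legitimate and $\psi_h(j_!\LL)_0$ is indeed concentrated in degree $0$ with dimension $rm$; the lifted loop advances each sheet by an arc of angle $2\pi/m$, producing the block-companion automorphism $\Phi$; and Definition \ref{dfn:2-8} then reduces the zeta function to the single determinant $\det(\id-t\Phi)$.

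One step is not airtight as written: the eigenvector construction $(v,\omega^{-1}v,\ldots,\omega^{-(m-1)}v)$ accounts for all $rm$ eigenvalues of $\Phi$ only when $A$ is diagonalizable; for $A$ with nontrivial Jordan blocks you have exhibited too few eigenvectors to conclude. The repair is routine and should be stated: either note that diagonalizable matrices are dense in $GL_r(\CC)$ and both sides of $\det(\id-t\Phi)=\det(\id-t^mA)$ are polynomial in the entries of $A$, or prove the identity directly by block row reduction on the companion matrix (eliminating $v_1,\ldots,v_{m-1}$ leaves $\det(\id-t^mA)$). Your closing caution about orientation conventions is the right one, and it resolves favorably here: with the conventions of \cite{Dimca} the nearby-cycle monodromy lifts the counterclockwise loop in the base, and the composite of the $m$ successive arc transports is exactly the counterclockwise monodromy $A$ of $\LL$ along $|z|=\eta^{1/m}$, which matches the normalization of $A$ in the statement.
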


\begin{proposition}\label{prp:5-3}
(\cite[Proposition 5.3]{M-T-2}) 
Let $\LL$ be a local system on 
$(\CC^*)^k$ for $k \geq 2$ 
and $j \colon (\CC^*)^k 
\hookrightarrow \CC^k$ 
the inclusion. Let $h : \CC^k 
\longrightarrow \CC$ be a 
function defined by $h(z)=z_1^{m_1}z_2^{m_2}\cdots 
z_k^{m_k} \not\equiv 1$ 
($m_i \in \ZZ_{\geq 0}$) for 
$z\in \CC^k$. Then 
the monodromy zeta function 
$\zeta_{h,0}(j_!\LL)(t)$ of 
$j_!\LL \in \Dbc(\CC^k)$ at $0 
\in \CC^k$ is $1 
\in \CC(t)^*$.
\end{proposition}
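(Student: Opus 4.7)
My plan is to exhibit a free $S^1$-symmetry of the pair $((\CC^*)^k,h)$, whose existence is guaranteed precisely by the hypothesis $k\geq 2$, and to deduce $\zeta_{h,0}(j_!\LL)(t)=1$ via a Gysin-type cancellation in the Leray spectral sequence of the resulting principal $S^1$-bundle on the Milnor fibre.

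Since $h\not\equiv 1$, the exponent vector $m=(m_1,\dots,m_k)\in\ZZ_{\geq 0}^k$ is nonzero, and because $k\geq 2$ the saturated sublattice
\[
L \;=\; \bigl\{ a\in\ZZ^k : a_1m_1+\cdots+a_km_k = 0\bigr\}
\]
has rank $k-1\geq 1$, so it contains a primitive vector $a$. The $\CC^*$-action $t\cdot(z_1,\dots,z_k):=(t^{a_1}z_1,\dots,t^{a_k}z_k)$ on $(\CC^*)^k$ preserves $h$ (because $\langle a,m\rangle=0$); its restriction to $S^1\subset\CC^*$ is free on $(\CC^*)^k$ by primitivity of $a$, preserves the Euclidean norm $\|z\|$, and hence preserves the Milnor fibre $F_0=\{h=\eta\}\cap B(0;\e)$ and its open subset $U:=F_0\cap(\CC^*)^k$, on which $j_!\LL$ is supported. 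Since the rotation induces the identity on $\pi_1((\CC^*)^k)=\ZZ^k$, the Milnor monodromy $\Phi$ of $\psi_h(j_!\LL)$ can be arranged to commute with the $S^1$-action by choosing horizontal lifts $S^1$-equivariantly (the action preserves each $h$-fibre).

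For the principal $S^1$-bundle $\pi\colon U\to B:=U/S^1$, the Leray spectral sequence
\[
E_2^{p,q} \;=\; H^p\bigl(B,\,R^q\pi_*(\LL|_U)\bigr) \;\Longrightarrow\; H^{p+q}(U,\LL|_U)
\]
is $\Phi$-equivariant and concentrated in rows $q=0,1$, with stalks $R^0\pi_*(\LL|_U)=\ker(\rho(a)-\id)$ and $R^1\pi_*(\LL|_U)=\mathrm{coker}(\rho(a)-\id)$, where $\rho\colon\ZZ^k\to GL_r(\CC)$ classifies $\LL$. Because $\rho$ factors through the abelian group $\ZZ^k$, the descended monodromy $\bar\Phi$ and the monodromy representation of $\pi_1(B)$ commute with $\rho(a)$; a Jordan-block computation on the generalised $1$-eigenspace of $\rho(a)$ then shows that $\ker(\rho(a)-\id)$ and $\mathrm{coker}(\rho(a)-\id)$ carry isomorphic $\bar\Phi$- and $\pi_1(B)$-actions, so $R^0\pi_*(\LL|_U)$ and $R^1\pi_*(\LL|_U)$ are isomorphic local systems on $B$, equivariantly for $\bar\Phi$. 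The alternating zeta product over $q\in\{0,1\}$ therefore collapses, giving $\zeta_\Phi(R\sect(U,\LL|_U))=1$.

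The main obstacle is twofold: first, one must pass from $R\sect(U,\LL|_U)$ to the full nearby cycle stalk $\psi_h(j_!\LL)_0=R\sect(F_0,j_!\LL|_{F_0})$, which requires the open-closed distinguished triangle for the inclusion $U\hookrightarrow F_0$ to control the boundary piece $F_0\setminus U$, where the $S^1$-action acquires fixed points but where $j_!\LL$ vanishes identically; second, one must verify that $\Phi$ commutes strictly (not merely up to homotopy) with $S^1$, so that the spectral sequence is genuinely $\Phi$-equivariant. I expect the cleanest rigorous route is to reformulate the argument sheaf-theoretically, working with $\CC^*$-equivariant constructible complexes on the quotient $(\CC^*)^k/\CC^*$ and applying Proposition~\ref{prp:2-9} to the associated push-forward of monodromy zeta functions, using $\chi(\CC^*)=0$ to make every contribution vanish.
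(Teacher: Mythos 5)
Your construction of the free $S^1$-action is fine (primitivity of $a$ does give freeness, the action preserves $h$ and all $|z_i|$, and the geometric monodromy can indeed be chosen inside the compact torus $(S^1)^k$ so as to commute strictly with it), but the pivotal step --- the claim that $R^0\pi_*(\LL|_U)$ and $R^1\pi_*(\LL|_U)$ are isomorphic as $\bar\Phi$-equivariant local systems on $B$ --- is false in general, and this is a genuine gap rather than a technicality. The stalks are $\ker(\rho(a)-\id)$ and $\mathrm{coker}(\rho(a)-\id)$ viewed as modules over the commuting family $\rho(\ZZ^k)$, and kernel and cokernel need not be isomorphic over the commutant. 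Concretely, take $r=3$ with basis $v_1,v_2,v_3$ and commuting nilpotents $X,Y$ defined by $Xv_1=v_2$, $Yv_1=v_3$ and all other images zero; let $\rho(a)=\id+X$ and let another generator of $\ZZ^k$ act by $\id+Y$. Then $\id+Y$ acts trivially on $\ker X=\langle v_2,v_3\rangle$ but by a nontrivial Jordan block on $\mathrm{coker}\,X$, so for $k\geq 3$ the two local systems are non-isomorphic on $B$; their cohomology groups on $B$ even have different dimensions, so the rows $q=0$ and $q=1$ of your Leray spectral sequence do \emph{not} cancel termwise. What survives of your ``Jordan-block computation'' is only the equality of characteristic polynomials of a commuting operator on $\ker$ and on $\mathrm{coker}$ (the zeta function of the two-term complex $V\to V$ with differential $\rho(a)-\id$ is $1$), which suffices when $B$ is homotopically a point (the case $k=2$) but not over a base with topology. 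Your proposed fallback is also not viable as stated: Proposition~\ref{prp:2-9} requires a \emph{proper} morphism, the quotient map $(\CC^*)^k\to(\CC^*)^k/\CC^*$ is not proper, and $\chi(\CC^*)=0$ alone cannot trivialize the zeta function exactly when $\LL$ has nontrivial monodromy along the fiber direction --- which is the whole difficulty here. (The lesser issue you flag yourself, passing from $R\sect(U,\LL|_U)$ to the $\psi_h$-stalk, is fixable: taking the closed-ball model of the Milnor fiber identifies the stalk with $R\sect_c$ of the torus part, and since an $S^1$-bundle projection is proper one has $R\pi_!=R\pi_*$, so the Leray argument is insensitive to this.)

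For comparison: the paper itself does not prove this proposition but cites \cite[Proposition 5.3]{M-T-2}, and the mechanism there is the full exterior-algebra cancellation rather than a two-row Gysin pairing. The stalk of $\psi_h(j_!\LL)$ at $0$ is the cohomology of the part of the Milnor fiber lying in $(\CC^*)^k$, which deformation retracts onto a disjoint union of $d=\gcd(m_1,\ldots,m_k)$ translated $(k-1)$-dimensional subtori; the cohomology of $\LL$ on such a torus is computed by the Koszul complex with terms $\CC^r\otimes\wedge^j\CC^{k-1}$ built from the commuting operators $A_i-\id$, the monodromy acts on this complex through a single commuting matrix $B$ (the cyclic permutation of the $d$ components producing the substitution $t\mapsto t^d$), and since the zeta function of a complex equals that of its cohomology one gets $\prod_{j}\det(\id-t^dB)^{(-1)^j\binom{k-1}{j}}=\det(\id-t^dB)^{(1-1)^{k-1}}=1$, which is precisely where $k\geq 2$ enters. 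Your free $S^1$-action captures one exterior degree of this cancellation; to repair your argument you would have to iterate it through all $k-1$ torus directions, i.e.\ reconstruct the Koszul computation, at which point the $S^1$-bundle formulation buys nothing.
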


\section{Monodromy at infinity of 
$A$-hypergeometric functions}\label{sec:3}

In this section, we inherit the notations 
and the situation in the introduction 
and prove our main theorem (Theorem \ref{TH1}). 
First, let us recall the definition of 
the non-resonance of the parameter 
$\gamma \in \CC^n$ introduced in 
\cite[Section 2.9]{G-K-Z-2}. Let $K$ 
be a convex cone in $\RR^n$ generated by 
the vectors $(a(1), 1), (a(2),1), \ldots 
, (a(m),1) \in \ZZ^n=\ZZ^{n-1}\oplus \ZZ$. 
For each face $\Gamma \prec K$ of $K$ 
denote by ${\rm Lin}(\Gamma) \simeq 
\CC^{\dim \Gamma}$ the $\CC$-linear span 
of $\Gamma$ in $\CC^n$. 

\begin{definition}
(\cite[Section 2.9]{G-K-Z-2}) We say 
that the parameter 
$\gamma \in \CC^n$ is non-resonant if for 
any face $\Gamma \prec K$ of codimension 
one we have 
$\gamma \notin \ZZ^n+ {\rm Lin}(\Gamma)$. 
\end{definition}
By the fundamental result of \cite{G-K-Z-1},  
if $\gamma \in \CC^n$ is non-resonant the 
generic rank of $\Sol(\M_{A, \gamma})$ 
is equal to the normalized $(n-1)$-dimensional 
volume $\Vol_{\ZZ}(Q) \in \ZZ$ 
of $Q$. Therefore, for any 
$j_0 \in \ZZ$ such that $1 \leq j_0 \leq m$ 
the degree of the 
the characteristic polynomial 
$\lambda_{j_0}^{\infty}(t)$ is 
$\Vol_{\ZZ}(Q)$. 

\begin{theorem}
Assume that $\gamma \in \CC^n$ is 
non-resonant. Then the 
characteristic polynomial 
$\lambda_{j_0}^{\infty}(t)$ of 
the $j_0$-th monodromy at infinity 
of $\Sol(\M_{A, \gamma})$ is given by 
\begin{equation}
\lambda_{j_0}^{\infty}(t)= 
\prod_{r=1}^k \left\{ t^{d_r} -
\exp (-2\pi \sqrt{-1}\delta_r) 
\right\}^{\Vol_{\ZZ}(\Delta_r)}, 
\end{equation}
where $\Vol_{\ZZ}(\Delta_r) \in \ZZ$ 
is the normalized $(n-2)$-dimensional 
volume of $\Delta_r$. 
\end{theorem}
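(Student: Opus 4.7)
The plan is to realize the solutions of $\M_{A,\gamma}$ via the Euler-type integral representation of \cite{G-K-Z-2}: for a suitable chain $\Gamma$ in $(\CC^*)^{n-1}$, one has
\begin{equation*}
f(z) = \int_{\Gamma} x^{\beta}\left(\sum_{j=1}^m z_j x^{a(j)}\right)^{\alpha} \frac{dx_1\cdots dx_{n-1}}{x_1\cdots x_{n-1}},
\end{equation*}
where $\alpha=\gamma_n$ and $\beta_i=-\gamma_i-1$. Let $\LL$ be the rank-one local system on $U=\{(x,z)\in (\CC^*)^{n-1}\times L\ |\ \sum z_j x^{a(j)}\neq 0\}$ with monodromies determined by $\alpha,\beta$, and let $p\colon (\CC^*)^{n-1}\times L\to L$ be the second projection. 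Then on $L\setminus S_L$ the restriction $\Sol(\M_{A,\gamma})|_L$ is (up to a shift) identified with $Rp_!(\iota_!\LL)$, where $\iota\colon U\hookrightarrow (\CC^*)^{n-1}\times L$ is the open inclusion. Consequently $\lambda_{j_0}^\infty(t)$ is read off, up to the standard normalization $\det(tI-A)=t^{\Vol_\ZZ(Q)}\zeta(t^{-1})$, from the monodromy zeta function of $Rp_!(\iota_!\LL)$ on $L$ at infinity.

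Next I would convert monodromy at infinity into nearby cycles at $0$ by the substitution $s=z_{j_0}\mapsto 1/s$, and build an equivariant toric compactification $X_\Sigma$ of the torus $T=(\CC^*)^{n-1}\times L^*$ (with $L^*=L\setminus\{0\}$) whose fan $\Sigma$ contains all primitive rays $u^r$ dual to the facets $\Delta_r\prec Q$ with $a(j_0)\notin\Delta_r$; this is the compactification of \cite{L-S} and \cite{M-T-3}. After a further proper toric modification $\pi\colon Y\to X_\Sigma$ so that the closure of $U$ has simple-normal-crossings complement, I can apply Proposition \ref{prp:2-9} to the proper map $\pi$ composed with the projection to $L$, and to the constant function $s$ on $Y$. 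This reduces the computation of $\zeta_{s,0}(Rp_!\iota_!\LL)$ to an integral over the boundary stratification of $Y$ of the constructible function $\zeta_s(\cdot)$.

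On each toric divisor $D_r$ corresponding to the ray $u^r$, two numerical invariants arise. First, the coordinate $s$ pulled back to a generic point of $D_r$ vanishes with multiplicity equal to $\langle u^r,a(j_0)\rangle-h_r=d_r$, which is positive exactly when $a(j_0)\notin\Delta_r$ (otherwise $d_r=0$ and the stratum lies over $s\neq 0$, contributing trivially). Second, the multiplicative monodromy of the integrand $x^\beta(\sum z_j x^{a(j)})^\alpha$ in a small loop normal to $D_r$ equals $\exp(-2\pi\sqrt{-1}(\alpha h_r+\langle\beta,u^r\rangle))=\exp(-2\pi\sqrt{-1}\delta_r)$, because on $D_r$ the Laurent polynomial factors as a monomial of weight $h_r$ times the facet polynomial $f_{\Delta_r}(x)=\sum_{a(j)\in\Delta_r}z_j x^{a(j)}$. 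The facets containing $a(j_0)$ give $d_r=0$, and by Proposition \ref{prp:5-3} their contributions, along with all contributions from toric strata of codimension $\geq 2$, collapse to $1$ in the zeta function.

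It then remains to compute, on each divisor $D_r$ with $d_r>0$, the restriction of $Rp_!\iota_!\LL$ along the residual torus direction. Bernstein-Khovanskii-Kushnirenko's theorem, applied to the facet polynomial $f_{\Delta_r}$, shows that the generic fiber of this restriction is a local system of rank $\Vol_\ZZ(\Delta_r)$, all of whose eigenvalues equal $\exp(-2\pi\sqrt{-1}\delta_r)$ because the multiplicative factor governing the monodromy is constant on the stratum. Applying Proposition \ref{prp:5-2} with $m=d_r$ produces the factor $\{t^{d_r}-\exp(-2\pi\sqrt{-1}\delta_r)\}^{\Vol_\ZZ(\Delta_r)}$, and multiplying over all $r=1,\dots,k$ and passing from the zeta function to the characteristic polynomial via the degree match $\sum d_r\Vol_\ZZ(\Delta_r)=\Vol_\ZZ(Q)$ yields the claimed formula. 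The main obstacle I expect is the bookkeeping at the interface between the $\D$-module-theoretic definition of $\lambda_{j_0}^\infty(t)$ and the sheaf-theoretic computation of $\zeta$: specifically, checking that the non-resonance hypothesis on $\gamma$ guarantees both the identification $\Sol(\M_{A,\gamma})|_L\cong Rp_!(\iota_!\LL)[\text{shift}]$ with no unwanted extra cohomology, and the vanishing of all cross-terms from lower-dimensional toric strata, so that the eigenvalues really concentrate on the divisorial contributions described above.
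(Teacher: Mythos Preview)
Your strategy matches the paper's: realize $\Sol(\M_{A,\gamma})|_L$ as $R\pi_!\LL[n-1]$ via the Euler integral, compactify $T=(\CC^*)^{n-1}_x\times\CC^*_s$ torically, push the zeta computation to the boundary by Proposition~\ref{prp:2-9}, discard strata of codimension $\geq 2$ by Proposition~\ref{prp:5-3}, and extract the divisorial terms via Proposition~\ref{prp:5-2} together with Bernstein--Khovanskii--Kushnirenko.

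The one genuine imprecision is your identification of the relevant rays. They must live in $\ZZ^n$, not $\ZZ^{n-1}$, and the correct primitive vectors are $\tl{u}^r=(u^r,-d_r)$: these are the inner conormals of the facets $\tl{\Delta}_r$ of the $n$-dimensional polytope $\tl{Q}={\rm conv}\bigl(Q\times\{0\},\,(a(j_0),1)\bigr)$, and verifying this (together with $\min_{\tl{v}\in\tl{P}}\langle\tl{u}^r,\tl{v}\rangle=h_r$) is exactly the key lemma in the paper's proof. It is the last coordinate $-d_r$ of $\tl{u}^r$ that makes $h=1/s$ vanish to order $d_r$ on the corresponding $T$-orbit, and it is the equality $\min_{\tl{v}\in\tl{P}}\langle\tl{u}^r,\tl{v}\rangle=h_r$ that forces the normal monodromy of $\LL$ there to be $\exp(2\pi\sqrt{-1}\delta_r)$. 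If the ray were literally $(u^r,0)$ as your wording suggests, $s$ would be a unit along that divisor and it would contribute nothing; your formula $d_r=\langle u^r,a(j_0)\rangle-h_r$ is correct arithmetic but does not by itself produce a vanishing order for $s$.

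A second clarification: BKK does not hand you a local system of rank $\Vol_{\ZZ}(\Delta_r)$ on the divisor. What it gives is the Euler characteristic $\chi\bigl(T_{\sigma_r}\setminus\{p_{\sigma_0}=0\}\bigr)=(-1)^{n-1}\Vol_{\ZZ}(\Delta_r)$, and this number enters as the exponent when you form the topological integral $\int_{g^{-1}(\infty)}\zeta_{h\circ g}(\iota_!\F)$ of Proposition~\ref{prp:2-9}; the sign $(-1)^{n-1}$ is then absorbed by the shift $[n-1]$.
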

\begin{proof}
Let 
\begin{equation}
L=\{ z=(z_1,z_2, \ldots , z_m) 
\in \CC^m \ | \ 
z_j=c_j \ \text{for} \ j \not= j_0\} 
\subset X=\CC^m_z . 
\end{equation}
($(c_1, c_2, \ldots , c_{j_0-1}, 
c_{j_0+1}, \ldots , c_m) \in \CC^{m-1}$) be the 
defining equation of $L \simeq \CC_s$ 
in $X=\CC^m_z$ and define a Laurent polynomial 
$p$ on $(\CC^*)_x^{n-1} \times L 
\simeq (\CC^*)_x^{n-1} \times \CC_s$ by 
\begin{equation}
p(x,s)=sx^{a(j_0)}+\sum_{j \not= j_0}
c_jx^{a(j)}. 
\end{equation}
Denote by $\tl{P}$ the convex hull of 
$(a(j_0), 1) \sqcup \{ (a(j), 0) \ | \ 
j \not= j_0 \}$ in $\RR^n_{\tl{v}}
=\RR^{n-1}_v \oplus \RR$. 
We may assume that 
$c_j \not= 0$ for any $j \not= j_0$ 
and the Newton polytope of 
$p(x,s)$ is $\tl{P}$. 
However note that the 
dimension of the polytope $\tl{P}$ 
is not necessarily equal to $n$. Let 
$U$ be an open subset of 
$(\CC^*)_x^{n-1} \times L$ defined by 
$U=\{ (x, s) \in (\CC^*)^{n-1} \times L 
\ | \ p(x,s) \not= 0\}$ and $\pi =s: 
U \longrightarrow L \simeq \CC$ 
the restriction of the second projection 
$(\CC^*)^{n-1} \times L \longrightarrow 
L$ to $U$. Define a local system $\LL$ of 
rank one on $U$ by 
\begin{equation}
\LL= \CC \ p(x,s)^{\alpha} x_1^{\beta_1} 
x_2^{\beta_2} \cdots x_{n-1}^{\beta_{n-1}}. 
\end{equation}
Then by \cite[page 270, line 9-10]{G-K-Z-2} 
we have an isomorphism 
\begin{equation}
\Sol(\M_{A, \gamma})|_L \simeq 
R\pi_! \LL [n-1]
\end{equation}
in $\Dbc (L)$. Let $j: L \simeq \CC_s 
\hookrightarrow \CC_s \sqcup \{ \infty \} 
=\PP^1$ be the embedding and $h(s)=\frac{1}{s}$ 
the holomorphic function defined on 
an neighborhood of $\infty$ in $\PP^1$ 
such that $\{ \infty \} =\{ h=0\}$. 
Then it suffices to show that the 
monodromy zeta function 
$\zeta_{h, \infty}(j_! R\pi_! \LL [n-1]) 
(t) \in \CC (t)^*$ of the constructible 
sheaf $j_! R\pi_! \LL [n-1] \in 
\Dbc (\PP^1)$ at $\infty \in \PP^1$ 
is given by 
\begin{equation}
\zeta_{h, \infty}(j_! R\pi_! \LL [n-1]) 
(t) = 
\prod_{r=1}^k \{1-
\exp (2\pi \sqrt{-1}\delta_r) t^{d_r} 
\}^{\Vol_{\ZZ}(\Delta_r)}. 
\end{equation}
Indeed, by the isomorphism 
$\CC_s^* \simeq \CC^*_h$, $h=\frac{1}{s}$ 
the sufficiently large circle 
\begin{equation}
C_{s_0}=\{ s_0\exp( \sqrt{-1} \theta ) 
\ | \ 0 \leq \theta \leq 2 \pi  \} 
\subset \CC^*_s \qquad \ 
(|s_0| >> 0) 
\end{equation}
of anti-clockwise direction 
is sent to the small one 
\begin{equation}
\tl{C}_{\frac{1}{s_0}}
=\{ \frac{1}{s_0}\exp(
- \sqrt{-1} \theta ) 
\ | \ 0 \leq \theta \leq 2 \pi \}
\subset \CC^*_h
\end{equation}
of clockwise direction. Let $T=
(\CC^*)_x^{n-1} \times \CC^*_s 
\simeq (\CC^*)^n$ be the open dense 
torus in $(\CC^*)_x^{n-1} \times L$ 
and $j^{\prime}: U \cap T \hookrightarrow T$ 
the inclusion. Then for the constructible 
sheaf $\F =j^{\prime}_! (\LL |_{U \cap T})$ 
on $T$ 
and the restriction $\pi^{\prime}: T 
\longrightarrow L \simeq \CC_s$ 
of the second projection $(\CC^*)_x^{n-1} 
\times L \longrightarrow L$ to $T$ 
we have 
\begin{equation}
\zeta_{h, \infty}(j_! R\pi_! \LL [n-1]) 
(t) = \zeta_{h, \infty}(j_! 
R\pi^{\prime}_! \F [n-1]) (t). 
\end{equation}
From now on, we shall construct a toric 
compactification $\overline{T}$ of $T$ 
such that the meromorphic extension 
of the coordinate function $s
=\pi^{\prime}:  T = 
(\CC^*)_x^{n-1} \times \CC^*_s 
\longrightarrow L \simeq \CC$ to 
$\overline{T}$ has no point of 
indeterminacy and induces a holomorphic 
map $g: \overline{T} \longrightarrow \PP^1$ 
such that $g \circ \iota = 
j \circ \pi^{\prime}$ for $\iota : 
T \hookrightarrow \overline{T}$. 
Let $\RR^n_{\tl{u}}$ be the dual 
vector space $(\RR^n_{\tl{v}})^*$ 
of $\RR^n_{\tl{v}} 
=\RR^{n-1}_v \oplus \RR$. We denote 
the dual lattice ${\rm Hom}_{\ZZ}(\ZZ^n , \ZZ)
\subset \RR^n_{\tl{u}}$ of $\ZZ^n \subset 
\RR^n_{\tl{v}}$ simply by $\ZZ^n$. 

\begin{definition}
For $\tl{u} \in \RR^n_{\tl{u}}$ 
we define the $\tl{u}$-part 
$p^{\tl{u}}(x,s)$ of the Laurent polynomial 
$p(x,s)$ by 
\begin{equation}
p^{\tl{u}}(x,s)
=\begin{cases}
\dsum_{\begin{subarray}{c}
j \not=j_0: \ (a(j),0) 
\\  \in \Gamma(\tl{P}; \tl{u}) 
\end{subarray}} 
c_j x^{a(j)}
 & \text{if $(a(j_0),1) 
\notin \Gamma(\tl{P}; \tl{u})$ }, 
\\
sx^{a(j_0)} + 
\dsum_{\begin{subarray}{c}
j \not=j_0: \ (a(j),0) 
\\  \in \Gamma(\tl{P}; \tl{u}) 
\end{subarray}} 
c_j x^{a(j)}
 & \text{if $(a(j_0),1) 
\in \Gamma(\tl{P}; \tl{u})$} , 
\end{cases}
\end{equation}
where we set 
\begin{equation}
\Gamma (\tl{P}; \tl{u}) 
=\{ \tl{v} \in \tl{P} \ | \ 
\langle \tl{u} , \tl{v} \rangle 
=\min_{ \tl{w} \in  \tl{P}} 
\langle \tl{u} , \tl{w}  \rangle 
\} \subset \tl{P}. 
\end{equation}
\end{definition} 

\begin{definition} 
We say that the Laurent polynomial $p(x,s)$ 
is non-degenerate if for any non-zero 
$\tl{u} \in \ZZ^n \subset \RR^n_{\tl{u}}$ 
the complex hypersurface 
\begin{equation}
\{ (x,s) \in T \ | \ p^{\tl{u}}(x,s)=0\} 
\end{equation} 
in $T \simeq (\CC^*)^n$ is smooth and reduced. 
\end{definition} 
Since $p(x,s)$ is non-degenerate for generic 
$(c_1, c_2, \ldots , c_{j_0-1}, 
c_{j_0+1}, \ldots , c_m) \in \CC^{m-1}$, for 
the calculation of $\zeta_{h, \infty}(j_! 
R\pi^{\prime}_! \F [n-1]) (t)$ we may 
assume that $p(x,s)$ is non-degenerate. Now 
we introduce an equivalence relation 
$ \sim $ of the dual vector space $\RR^n_{\tl{u}}$ 
of $\RR^n_{\tl{v}}$ defined by 
\begin{equation}
\tl{u} \sim \tl{u}^{\prime} 
\quad \Longleftrightarrow \quad 
\Gamma(\tl{P} ; \tl{u}) = 
\Gamma(\tl{P} ; \tl{u}^{\prime}). 
\end{equation} 
Then we obtain a decomposition $\RR^n_{\tl{u}}
=\bigsqcup \tau$ into equivalence classes $\tau$ and 
a subdivision $\Sigma_1=\{ \overline{\tau} \}$ 
of $\RR^n_{\tl{u}}$ into convex 
cones $\overline{\tau}$. 
Namely $\Sigma_1$ is the 
dual subdivision of $\tl{P} \subset 
\RR^n_{\tl{v}}$. 
Note that $\Sigma_1$ 
is not necessarily a fan in $\RR^n_{\tl{u}}$ 
since we do not assume $\dim \tl{P} =n$. 
Next for $\varepsilon_1, 
\varepsilon_2, \ldots , \varepsilon_n 
= \pm 1$ we set 
\begin{equation}
\sigma_{\varepsilon_1, \ldots , \varepsilon_n} 
= \{ \tl{u}=(\tl{u}_1, \tl{u}_2, \ldots , 
\tl{u}_n) \ | \ \varepsilon_i \tl{u}_i \geq 0 
\ \text{for} \ i=1,2, \ldots , n \} 
\end{equation} 
and consider the complete fan $\Sigma_2$ in 
$\RR^n_{\tl{u}}$ consisting of the cones 
$\sigma_{\varepsilon_1, \ldots , \varepsilon_n}$ 
($\varepsilon_1, 
\varepsilon_2, \ldots , \varepsilon_n 
= \pm 1$) 
and their faces. Let $\Sigma_0$ be the 
common subdivision of $\Sigma_1$ and 
$\Sigma_2$. Applying some more subdivisions 
to $\Sigma_0$ if necessary, we obtain a 
complete fan $\Sigma$ in $\RR^n_{\tl{u}}$ 
such that the toric variety $X_{\Sigma}$ 
associated to it is smooth and complete 
(see \cite{Fulton}, \cite{G-K-Z} and 
\cite{Oda} etc.). 
Recall that the algebraic torus $T$ acts 
on $X_{\Sigma}$ and there exists a 
natural bijection between the set of 
$T$-orbits in 
$X_{\Sigma}$ and that of the cones 
$\sigma \in \Sigma$ in $\Sigma$. 
For a cone $\sigma \in \Sigma$ denote 
by $T_{\sigma}$ the corresponding 
$T$-orbit in $X_{\Sigma}$. Then we obtain 
a decomposition $X_{\Sigma} = 
\sqcup_{\sigma \in \Sigma}T_{\sigma}$ 
of $X_{\Sigma}$ into $T$-orbits. Let 
$\iota : T \hookrightarrow X_{\Sigma}$ 
be the canonical embedding. 
We can show that the 
extension of $s=\pi^{\prime} 
: T \longrightarrow \CC 
$ to a meromorphic function on $X_{\Sigma}$ 
has no point of indeterminacy as follows. 
Indeed, for an $n$-dimensional cone 
$\sigma_0 \in \Sigma$ denote by 
 $\CC^n(\sigma_0)$ 
($\simeq \CC^n$) the smooth toric variety 
associated to the fan consisting of  
$\sigma_0$ and its faces. Then 
$\CC^n(\sigma_0)$ is an affine open 
subset of $X_{\Sigma}$ containing $T$ 
and $X_{\Sigma}$ 
is covered by such open subsets. 

\begin{lemma}
For any $n$-dimensional cone 
$\sigma_0 \in \Sigma$, the meromorphic 
extension of $s=\pi^{\prime} 
: T \longrightarrow \CC 
$ to $\CC^n(\sigma_0)$ has no point of 
indeterminacy. 
\end{lemma}
\begin{proof}
For an $n$-dimensional cone 
$\sigma_0 \in \Sigma$, let 
$\{ \tl{u}(\sigma_0)^1, 
\tl{u}(\sigma_0)^2, \ldots , 
\tl{u}(\sigma_0)^n \}$ be the 
$1$-skelton of $\sigma_0$ 
(i.e. the set of the 
primitive vectors on the 
edges of $\sigma_0$). Then 
on $\CC^n(\sigma_0)
\simeq \CC^n_y$ the meromorphic 
extension $g$ of $s=\pi^{\prime} 
: T \longrightarrow \CC$ has the form 
\begin{equation} 
g(y)=y_1^{k_1}y_2^{k_2} \cdots 
y_n^{k_n}, 
\end{equation} 
where we set 
\begin{equation} 
k_i= 
\langle \tl{u}(\sigma_0)^i, (0, \ldots , 0,1) 
\rangle \in \ZZ 
\end{equation} 
for $i=1,2, \ldots , n$.  Since $\Sigma$ 
is a subdivision of $\Sigma_2$, 
we have 
\begin{equation} 
k_i \geq 0 \quad (i=1,2, \ldots , n) 
\qquad \text{or} \qquad 
k_i \leq 0 \quad (i=1,2, \ldots , n). 
\end{equation} 
\end{proof}
By this lemma, 
there exists a holomorphic map 
$g: X_{\Sigma}=\overline{T} \longrightarrow 
\PP^1$ such that $g \circ \iota = 
j \circ \pi^{\prime}$. Since $g$ is proper, we 
thus obtain an isomorphism 
\begin{equation} 
j_! R\pi^{\prime}_! \F \simeq 
Rg_* \iota_! \F 
\end{equation}
in $\Dbc(\PP^1)$.  
Then by Proposition \ref{prp:2-9}, 
for the calculation 
of $\zeta_{h, \infty}(j_! R\pi^{\prime}_! \F) (t)
= \zeta_{h, \infty}(Rg_* \iota_! \F) (t) 
\in \CC (t)^*$ it suffices to calculate 
the monodromy zeta function 
$\zeta_{h \circ g}(\iota_! \F) (t) 
\in \CC (t)^*$ of $\iota_! \F 
\in \Dbc (X_{\Sigma})$ 
at each point of $g^{-1} 
(\infty ) \subset X_{\Sigma}$. We can easily 
see that $g^{-1} 
(\infty ) $  is a union of $T$-orbits. 
Let $\sigma \in \Sigma$ be a $d$-dimensional 
cone in $\Sigma$ such that $T_{\sigma} 
\subset g^{-1}(\infty )$. We choose an $n$-dimensional 
cone $\sigma_0 \in \Sigma$ such that 
$\sigma \prec \sigma_0$ and let 
$\{ \tl{u}(\sigma_0)^1, \tl{u}(\sigma_0)^2, \ldots , 
\tl{u}(\sigma_0)^n \}$ be its 
$1$-skelton. Let $\CC^n(\sigma_0)
\simeq \CC^n_y \subset X_{\Sigma}$ be 
the smooth toric variety 
defined by $\sigma_0$ as 
above. Without 
loss of generality we may assume that 
$\{ \tl{u}(\sigma_0)^1,  \ldots , 
\tl{u}(\sigma_0)^d \}$ is the $1$-skelton 
of $\sigma$. Then by the condition 
$T_{\sigma} \subset g^{-1}(\infty )$ at least 
one of the vectors $\tl{u}(\sigma_0)^1,  \ldots , 
\tl{u}(\sigma_0)^d $ is contained in 
the open half space 
\begin{equation} 
H_{<0}= \{ \tl{u}=(\tl{u}_1, \tl{u}_2, \ldots , 
\tl{u}_n) \ | \ \tl{u}_n <0 \} \subset \RR^n_{\tl{u}}. 
\end{equation} 
Since $\Sigma$ is a subdivision of $\Sigma_2$ 
we get also 
\begin{equation} 
\sigma \subset \sigma_0 \subset 
H_{ \leq 0}= \{ \tl{u}=(\tl{u}_1, \tl{u}_2, \ldots , 
\tl{u}_n) \ | \ \tl{u}_n \leq 0 \}. 
\end{equation} 
In the affine open subset 
$\CC^n(\sigma_0)
\simeq \CC^n_y$ of $X_{\Sigma}$ the 
$(n-d)$-dimensional $T$-orbit 
$T_{\sigma}$ is defined by 
\begin{equation} 
T_{\sigma}= 
\{ y \in  \CC^n(\sigma_0)
 \ | \ y_1= \cdots =y_d =0 \ \text{and} \ 
y_{d+1}, \ldots , y_n \in \CC^* \}. 
\end{equation} 
Denote the meromorphic extension of the 
Laurent polynomial $p(x,s)$ to $X_{\Sigma}$ 
simply by $p$. Then on $\CC^n(\sigma_0)
\simeq \CC^n_y$ the meromorphic 
function $p$ has the form 
\begin{equation} 
p(y)=y_1^{l_1}y_2^{l_2} \cdots 
y_n^{l_n} \times p_{\sigma_0}(y), 
\end{equation} 
where we set 
\begin{equation} 
l_i= \min_{ \tl{v} \in \tl{P}} 
\langle \tl{u}(\sigma_0)^i, \tl{v} 
\rangle \in \ZZ
\end{equation} 
for $i=1,2, \ldots , n$ and 
$p_{\sigma_0}(y)$ is a 
polynomial on $\CC^n(\sigma_0)
\simeq \CC^n_y$. 
By the non-degeneracy 
of $p(x,s)$ the complex hypersurface 
$\{ y \in  \CC^n(\sigma_0)
 \ | \ p_{\sigma_0}(y)=0 \}$ intersects 
$T_{\sigma}$ transversally. 
Let $\iota_{\sigma_0} : T \cap 
\{ p_{\sigma_0} \not= 0\} \hookrightarrow 
\CC^n(\sigma_0) \simeq \CC^n_y$  
be the inclusion. Then on $\CC^n(\sigma_0)
\simeq \CC^n_y$ we have an isomorphism 
\begin{equation} 
\iota_! \F \simeq 
( \iota_{\sigma_0})_! \LL^{\prime}, 
\end{equation} 
where $\LL^{\prime}$ is a local 
system on $T \cap 
\{ p_{\sigma_0} \not= 0\} $. 
Moreover on $\CC^n(\sigma_0)
\simeq \CC^n_y$ the meromorphic 
function $g$ has the form 
\begin{equation} 
g(y)=y_1^{l'_1}y_2^{l'_2} \cdots 
y_n^{l'_n}, 
\end{equation} 
where we set 
\begin{equation} 
l'_i= 
\langle \tl{u}(\sigma_0)^i, (0, \ldots , 0,1) 
\rangle \leq 0 
\end{equation} 
for $i=1,2, \ldots , n$.  Hence the 
function $h \circ g$ has 
the form 
\begin{equation} 
(h \circ g)(y)=y_1^{-l'_1}y_2^{-l'_2} \cdots 
y_n^{-l'_n}, 
\end{equation}
on $\CC^n(\sigma_0)
\simeq \CC^n_y$. By Proposition 
\ref{prp:5-3}, if $d=\dim \sigma \geq 2$ 
we have $\zeta_{h \circ g, y}( \iota_! \F) (t)
=1$ for any $y \in T_{\sigma}$.  
This implies that for the calculation 
of $\zeta_{h, \infty}(Rg_* \iota_! \F) (t) 
\in \CC (t)^*$ it suffices to consider 
only the cones $\sigma \in \Sigma$ of 
dimension one such that 
$T_{\sigma} \subset g^{-1}(\infty )$. From 
now on, we assume always $d=\dim 
\sigma =1$. Denote the unique 
primitive vector $\tl{u}(\sigma_0)^1
\in \ZZ^n \setminus \{ 0 \}$ on 
$\sigma$ simply by $\tl{u}(\sigma)$. 
Then by the condition $T_{\sigma} 
\subset g^{-1}(\infty )$ we 
have $\tl{u}(\sigma) \in H_{<0}$. 
On $\CC^n(\sigma_0)
\simeq \CC^n_y$ we have an isomorphism 
\begin{equation} 
\iota_! \F \simeq 
( \iota_{\sigma_0})_! \left\{ \CC \ y_1^{\delta} 
\times (y_2^{\rho_2} \cdots y_n^{\rho_n}) 
\times p_{\sigma_0}(y)^{\alpha} 
\right\} , 
\end{equation} 
where $\rho_2, \ldots , \rho_n$ 
are some complex numbers and 
we set 
\begin{equation} 
l= \min_{ \tl{v} \in \tl{P}} 
\langle \tl{u}(\sigma), \tl{v} 
\rangle \in \ZZ 
\end{equation} 
and $\delta =\alpha l + \langle 
(\beta , 0), \tl{u}(\sigma) \rangle 
\in \CC$. 
Moreover the 
meromorphic function $h \circ g:X_{\Sigma} 
\longrightarrow \PP^1$ has the form 
\begin{equation} 
(h \circ g)(y)=y_1^{-l'} 
\times (y_2^{-l'_2} \cdots y_n^{-l'_n}) 
\end{equation}
on $\CC^n(\sigma_0)$, where we set 
\begin{equation} 
l'= 
\langle \tl{u}(\sigma), (0, \ldots , 0,1) 
\rangle < 0. 
\end{equation} 
Therefore, by Proposition 
\ref{prp:5-2} and \ref{prp:5-3} 
for $y \in T_{\sigma} 
\simeq (\CC^*)^{n-1}$ 
we have 
\begin{equation}
\zeta_{h \circ g, y}(\iota_! 
\F )(t) 
=\begin{cases}
1- \exp (2\pi \sqrt{-1}\delta )t^{-l'} 
 & \text{if $y 
\notin \{ p_{\sigma_0}=0\}$ }, 
\\
1 & \text{if $y 
\in \{ p_{\sigma_0}=0\}$ }. 
\end{cases}
\end{equation}
Note that $\Gamma (\tl{P} ; \tl{u}(\sigma)) $ 
is naturally identified with the 
Newton polytope of the Laurent polynomial 
$p_{\sigma_0}|_{T_{\sigma}}$. Hence,  
if $\dim \Gamma (\tl{P} ; 
\tl{u}(\sigma)) < \dim T_{\sigma} =n-1$ 
 the Euler characteristic 
$\chi (T_{\sigma } \setminus 
\{ p_{\sigma_0}=0\} )$ 
of $T_{\sigma } \setminus 
\{ p_{\sigma_0}=0\} $ 
is zero by Bernstein-Khovanskii-Kushnirenko's 
theorem (see \cite{Khovanskii} 
and \cite{Kushnirenko} etc.). 
This implies that for the calculation of 
$\zeta_{h, \infty}(Rg_* \iota_! \F) (t) 
\in \CC (t)^*$ 
it suffices to consider only 
the $1$-dimensional 
cones $\sigma \in \Sigma$ 
such that 
$\tl{u}(\sigma) \in \ZZ^n 
\cap H_{<0}$ and $\dim \Gamma 
(\tl{P} ; \tl{u}(\sigma))=n-1$. 
In order to list up all such $1$-dimensional 
cones $\sigma \in \Sigma$, for 
$r=1,2, \ldots , k$ 
let $\tl{\Delta}_r$ be the convex hull 
of $\Delta_r \subset \RR^{n-1}_v 
\oplus \{ 0\} $ and the point 
$(a(j_0), 1)$ in $\RR^n_{\tl{v}}
= \RR^{n-1}_v \oplus \RR$. 
We also denote by $\tl{Q}$ 
the convex hull of $Q \subset 
\RR^{n-1}_v \oplus \{ 0\} $ and 
$(a(j_0), 1)$. 
Then $\tl{\Delta}_1, \tl{\Delta}_2, 
\ldots , \tl{\Delta}_k$ 
are the facets of the $n$-dimensional 
polytope $\tl{Q}$ whose inner conormal 
vectors are contained in the open half space 
$H_{<0} \subset \RR^n_{\tl{u}}$. 
For $r=1,2, \ldots , k$ let $\tl{u}^r 
\in \ZZ^n \cap H_{<0}$ be the 
unique non-zero primitive 
vector such that  
\begin{equation}
\tl{\Delta}_r=\{ \tl{v} 
\in \tl{Q} \ | \ 
\langle \tl{u}^r , \tl{v} 
\rangle = 
\min_{ \tl{w} \in \tl{Q}} 
\langle \tl{u}^r , \tl{w} 
\rangle \}. 
\end{equation}
Then by showing that 
the vector $(u^r , -d_r) \in 
\ZZ^n $ takes 
the constant value $h_r \in \ZZ$ on 
$\tl{\Delta}_r$ 
we can easily prove the following lemma. 
\begin{lemma}\label{LI}
For $r=1,2, \ldots , k$ we have 
$\tl{u}^r =(u^r , -d_r) \in 
\ZZ^n \cap H_{<0}$ and 
\begin{equation}
\min_{ \tl{v} \in \tl{P}} 
\langle \tl{u}^r , \tl{v} \rangle 
=\min_{ \tl{v} \in \tl{Q}} 
\langle \tl{u}^r , \tl{v} \rangle =h_r. 
\end{equation}
\end{lemma}
Since $d_r>0$ is the lattice distance of 
the point $(a(j_0), 0) 
\in \tl{Q}$ from $\tl{\Delta}_r$ 
by this lemma, we can easily 
see that the normalized $(n-1)$-dimensional 
volume $\Vol_{\ZZ}(\tl{\Delta}_r)$ 
of $\tl{\Delta}_r$ is 
equal to $\Vol_{\ZZ}(\Delta_r)$. 
By the definition of $\tl{P}$ 
and $\tl{u}^1, \tl{u}^2, 
\ldots , \tl{u}^k$, for 
the the calculation of 
$\zeta_{h, \infty}(Rg_* \iota_! \F) (t) 
\in \CC (t)^*$ it suffices to 
consider only the $1$-dimensional 
cones $\sigma_r=\RR_{\geq 0}\tl{u}^r 
$ ($r=1,2, \ldots , k$) in $\Sigma$. 
Hence let us consider the case where 
$\sigma=\sigma_r$ for some $r=1,2, \ldots , k$ 
and $\sigma_0$ is an $n$-dimensional cone 
in $\Sigma$ such that $\sigma \prec \sigma_0$. 
Then by Lemma \ref{LI} on 
$\CC^n(\sigma_0) \simeq \CC^n_y$ we 
have an isomorphism 
\begin{equation} 
\iota_! \F \simeq 
(\iota_{\sigma_0})_! \left\{ \CC \ y_1^{\delta_r} 
\times (y_2^{\rho_2} \cdots y_n^{\rho_n}) 
\times p_{\sigma_0}(y)^{\alpha} \right\}, 
\end{equation} 
where $\rho_2, \ldots , \rho_n$ 
are some complex numbers and 
we set $\delta_r=\alpha h_r + 
\langle \beta , u^r \rangle 
=\alpha h_r + 
\langle (\beta , 0) , \tl{u}^r \rangle 
\in \CC$ as in the 
introduction. 
By the same lemma, 
the function $h \circ g$ 
has the form $y_1^{- \langle \tl{u}^r , 
(0, \ldots , 0,1) \rangle } 
\times (y_2^{-l'_2} \cdots y_n^{-l'_n})$ 
$= y_1^{d_r} \times (y_2^{-l'_2} 
\cdots y_n^{-l'_n}) $ 
on $\CC^n(\sigma_0) \simeq \CC^n_y$. 
Then by Proposition 
\ref{prp:5-2} and \ref{prp:5-3}, 
for $y \in T_{\sigma}= T_{\sigma_r}
\simeq (\CC^*)^{n-1}$ 
we have 
\begin{equation}
\zeta_{h \circ g, y}(\iota_! 
\F )(t) 
=\begin{cases}
1- \exp (2\pi \sqrt{-1}\delta_r )t^{d_r} 
 & \text{if $y 
\notin \{ p_{\sigma_0}=0\}$ }, 
\\
1 & \text{if $y 
\in \{ p_{\sigma_0}=0\}$ }. 
\end{cases}
\end{equation}
Since the Euler characteristic 
$\chi (T_{\sigma_r} \setminus 
\{ p_{\sigma_0}=0\} )$ 
is equal to 
$(-1)^{n-1} \Vol_{\ZZ}(\tl{\Delta}_r)
=(-1)^{n-1} \Vol_{\ZZ}(\Delta_r)$ 
by Bernstein-Khovanskii-Kushnirenko's 
theorem, we obtain the desired result 
\begin{eqnarray}
\zeta_{h, \infty}(j_! R\pi_! \LL [n-1])(t) 
&=& \zeta_{h, \infty}(Rg_* 
\iota_! \F [n-1]) (t) 
\\
&=& \prod_{r=1}^k \left\{ 1-
\exp (2\pi \sqrt{-1}\delta_r) t^{d_r} 
\right\}^{\Vol_{\ZZ}(\Delta_r)}. 
\end{eqnarray}
This completes the proof. 
\end{proof}

\begin{example}(\cite[page 25-26]{S-S-T}) 
For the $3 \times 4$ integer matrix 
\begin{eqnarray}
M=(m_{ij}) =\left(  
\begin{array}{cccc}
1 & 0 & 0 & -1 \\
0 & 1 & 0 & 1 \\
0 & 0 & 1 & 1 \\
\end{array} 
\right) \in M(3,4, \ZZ )
\end{eqnarray} 
and the vector $\rho ={^t(\rho_1, \rho_2, 
\rho_3)}={^t (c-1, -a, -b)} \in \CC^3$ consider 
the following system of partial differential 
equations on $\CC^4_z$. 
\begin{gather}
\left(\sum_{j=1}^{4} m_{ij}
z_j\frac{\partial}{\partial 
z_j}-\rho_i\right)
 f(z)=0 \hspace{5mm} (1\leq i\leq 3), \\
\left\{ \prod_{\mu_j>0} \left(\frac{\partial}{\partial 
z_j}\right)^{\mu_j} -\prod_{\mu_j<0} 
\left(\frac{\partial}{\partial 
z_j}\right)^{-\mu_j} \right\} f(z)=0 
\hspace{5mm} (\mu \in {\rm Ker} M \cap 
\ZZ^{4}\setminus \{0\}). 
\end{gather}
By using the unimodular 
matrix 
\begin{eqnarray}
B =\left(  
\begin{array}{ccc}
1 & 0 & 0 \\
0 & 1 & 0  \\
1 & 1 & 1 \\
\end{array} 
\right) \in SL(3, \ZZ )
\end{eqnarray} 
let us set 
\begin{eqnarray}
\tl{A} =(a_{ij}) =BM =
\left(  
\begin{array}{cccc}
1 & 0 & 0 & -1 \\
0 & 1 & 0 & 1 \\
1 & 1 & 1 & 1 \\
\end{array} 
\right) \in M(3,4, \ZZ )
\end{eqnarray} 
and $\gamma ={^t(\gamma_1, \gamma_2, 
\gamma_3)}=B \rho ={^t(c-1, -a, 
c-a-b-1)} \in \CC^3$. Then we obtain 
an equivalent system 
\begin{gather}
\left(\sum_{j=1}^{4} a_{ij}
z_j\frac{\partial}{\partial 
z_j}-\gamma_i\right)
 f(z)=0 \hspace{5mm} (1\leq i\leq 3), \\
\left\{ \prod_{\mu_j>0} \left(\frac{\partial}{\partial 
z_j}\right)^{\mu_j} -\prod_{\mu_j<0} 
\left(\frac{\partial}{\partial 
z_j}\right)^{-\mu_j} \right\} f(z)=0 
\hspace{5mm} (\mu \in {\rm Ker} \tl{A} \cap 
\ZZ^{4}\setminus \{0\}). 
\end{gather}
on $\CC^4_z$. Since the last row of 
the matrix $\tl{A}$ is $(1,1,1,1)$, 
this is the $A$-hypergeometric 
holonomic system 
$\M_{A, \gamma}$ 
associated to 
\begin{equation}
A =\{ (1,0), (0,1), (0,0), (-1,1) 
\} \subset \ZZ^2
\end{equation} 
and $\gamma \in \CC^3$ (see the 
introduction). By Theorem \ref{TH1} 
for $j_0=1$, the 
characteristic polynomial 
$\lambda_{1}^{\infty}(t)$ of 
the $1$-st monodromy at infinity 
of the $A$-hypergeometric functions 
${\rm Sol}(\M_{A, \gamma})$ is given by 
\begin{equation}\label{monod} 
\lambda_{1}^{\infty}(t)= 
\left\{ t-\exp (2\pi \sqrt{-1}(c-a)) \right\} 
\cdot \left\{ t-\exp (2\pi 
\sqrt{-1}(c-b)) \right\} . 
\end{equation}
On the other hand, according to 
\cite[page 25-26]{S-S-T} the holomorphic 
solutions $f(z)$ to $\M_{A, \gamma}$ have the form 
\begin{equation}
f(z)=z_1^{c-1}z_2^{-a}z_3^{-b}g 
\left( 
\frac{z_1z_4}{z_2z_3} 
\right) , 
\end{equation}
where $g(x)$ satisfy the 
Gauss hypergeometric equation 
\begin{equation}
x(1-x)\frac{d^2g}{dx^2}(x)
+\{ c- (a+b+1)x \} \frac{d g}{d x}(x) 
-ab g(x)=0. 
\end{equation}
Since the characteristic exponents 
of this equation at $\infty \in \PP$ 
are $a,b \in \CC$, in this very special case 
we can check that the monodromy at infinity 
of the restriction of 
${\rm Sol}(\M_{A, \gamma})$ to a generic 
complex line $L \simeq \CC \subset \CC^4_z$ 
of the form 
\begin{equation}
L=\{ z \in \CC^4 \ | \ 
z_2=c_2, z_3=c_3, z_4=c_4 \} 
\end{equation}
is given by the formula 
\eqref{monod} .  
\end{example}

\end{document}